\newtheorem{theorem}{\bf Theorem}[section]
\newtheorem{lemma}[theorem]{\bf Lemma}
\newtheorem{proposition}[theorem]{\bf Proposition}
\newtheorem{corollary}[theorem]{\bf Corollary}
\theoremstyle{remark}
\author{Sara Rodrigues}
\address{ Department of Mathematics, University of Brasilia, Brasilia-DF, 70910-900 Brazil}
\email{sararaissa@mat.unb.br}
\author{Pavel Shumyatsky}
\address{Department of Mathematics, University of Brasilia, Brasilia-DF, 70910-900 Brazil}
\email{pavel@unb.br}
\keywords{finite groups, automorphisms, associated Lie algebras}
\subjclass[2010]{20D45}
\thanks{This work was supported by the Conselho Nacional de Desenvolvimento Cient\'{\i}fico e Tecnol\'ogico (CNPq),  and Funda\c c\~ao de Apoio \`a Pesquisa do Distrito Federal (FAPDF), Brazil.}
\title[Exponent of a finite group]
{Exponent of a finite group admitting a coprime automorphism}
\begin{document}

\begin{abstract}
\noindent  
Let $G$ be a finite group admitting a coprime automorphism $\phi$ of order $n$. Denote by $G_{\phi}$ the centralizer of $\phi$ in $G$ and by $G_{-\phi}$ the set $\{ x^{-1}x^{\phi}; \ x\in G\}$. We prove the following results.

\noindent 1. If every element from $G_{\phi}\cup G_{-\phi}$ is contained in a $\phi$-invariant subgroup of exponent dividing $e$, then the exponent of $G$ is $(e,n)$-bounded.

\noindent  2. Suppose that $G_{\phi}$ is nilpotent of class $c$. If $x^{e}=1$ for each $x \in G_{-\phi}$ and any two elements of $G_{-\phi}$ are contained in a $\phi$-invariant soluble subgroup of derived length $d$, then the exponent of $[G,\phi]$ is bounded in terms of $c,d,e,n$.   
\end{abstract}
\maketitle

\section{Introduction}

Given a group $G$ with an automorphism $\phi$, denote by $G_\phi$ the fixed-point subgroup $\{x\in G; \ x^\phi=x\}$ and by $G_{-\phi}$ the set $\{x^{-1}x^\phi;\ x\in G\}$. Write $[G,\phi]$ for the subgroup generated by $G_{-\phi}$. The following theorem was proved in \cite{asa}.

\begin{theorem}\label{0} Let $c,d,e$ be nonnegative integers and $G$ a finite group of odd order admitting an involutory automorphism $\phi$ such that $G_\phi$ is nilpotent of class $c$ and $x^e=1$ for each $x\in G_{-\phi}$. Suppose that the subgroup $\langle x,y\rangle$ has derived length at most $d$ for every $x,y\in G_{-\phi}$. Then the exponent of $[G,\phi]$ is $(c,d,e)$-bounded.
\end{theorem}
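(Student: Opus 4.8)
The plan is to combine standard coprime-action reductions with the Lie-theoretic machinery (the Jennings--Zassenhaus filtration together with Zelmanov's theorem on PI Lie algebras), since the phrase ``associated Lie algebras'' signals this as the intended engine. First I record the special features of an involutory automorphism: because $\phi^2=1$ and $|G|$ is odd, $\phi$ is coprime, $G_{-\phi}=\{g\in G:\ g^\phi=g^{-1}\}$ is exactly the set of elements inverted by $\phi$, and $G=[G,\phi]G_\phi$. By the Feit--Thompson theorem $G$ is soluble, so I may replace $G$ by $H=[G,\phi]$; here $[H,\phi]=H$, the fixed-point subgroup $C_H(\phi)\le G_\phi$ is still nilpotent of class at most $c$, and $H_{-\phi}\subseteq G_{-\phi}$, so all three hypotheses are inherited. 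Thus from the outset I assume $G=[G,\phi]=\langle G_{-\phi}\rangle$ is generated by elements of order dividing $e$.

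Next I would reduce to the case of a $p$-group. Using that $C_G(\phi)$ is nilpotent of class $c$, the known bounds on the Fitting height of a soluble group in terms of the centralizer of a coprime automorphism give a $c$-bounded Fitting height for $G$; an induction along the Fitting series (choosing $\phi$-invariant Hall and Sylow subgroups throughout) then reduces the exponent bound to the nilpotent case. Once $G$ is nilpotent it is the direct product of its Sylow subgroups, and since $G=\langle G_{-\phi}\rangle$ is generated by elements of order dividing $e$, only primes dividing $e$ can occur, at most $\log_2 e$ of them. It therefore suffices to bound, by a function of $c,d,e$, the exponent of a $\phi$-invariant Sylow $p$-subgroup $P$ for each odd prime $p\mid e$.

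Fix such a $P$ and pass to the associated Lie ring $L=L(P)$ over the field of $p$ elements afforded by the Jennings--Zassenhaus filtration. The involution $\phi$ induces an automorphism of $L$, and since $p$ is odd we obtain the eigenspace decomposition $L=L_+\oplus L_-$ satisfying $[L_+,L_+]\subseteq L_+$, $[L_+,L_-]\subseteq L_-$ and $[L_-,L_-]\subseteq L_+$, where $L_+=C_L(\phi)$ is the image of $G_\phi$ and $L_-$ is the image of $G_{-\phi}$; moreover $L$ is generated by $L_-$. The three hypotheses translate as follows: nilpotency of class $c$ of $G_\phi$ yields that $L_+$ is nilpotent of $c$-bounded class; the relation $x^e=1$ on $G_{-\phi}$ forces each element of $L_-$ coming from $G_{-\phi}$ to be ad-nilpotent of $e$-bounded index, via the standard link between the order of a group element and the ad-nilpotency index of its image; and the bound $d$ on the derived length of $\langle x,y\rangle$ for $x,y\in G_{-\phi}$ translates into the statement that the subalgebra generated by any two elements of $L_-$ is soluble of derived length at most $d$, which is a polynomial identity for $L$.

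The concluding step is to invoke Zelmanov's theorem: a PI Lie algebra generated by finitely many elements all of whose commutators are ad-nilpotent of bounded index is nilpotent of bounded class. Applying it to the subalgebras generated by boundedly many elements of $L_-$ and transferring the resulting nilpotency class back through the Lazard correspondence would bound the exponent of $P$, and hence of $G$, in terms of $c,d,e$. The main obstacle, and the technical heart of the argument, is verifying the ad-nilpotency hypothesis for \emph{all} commutators in the generators, not merely for the generators themselves: a commutator of even weight lands in the fixed part $L_+$, on which the nilpotency class $c$ gives ad-nilpotency on $L_+$ but not a priori on $L_-$. Overcoming this requires linearising the identities coming from $x^e=1$ and from the solubility condition so as to propagate ad-nilpotency from $L_-$ across the whole two-term grading $L=L_+\oplus L_-$, exploiting that $L=\langle L_-\rangle$ and that $L_+$ is nilpotent; this interplay between the inverted part and the nilpotent fixed part is precisely where the structure of an involution is used most heavily.
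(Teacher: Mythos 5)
Your skeleton — pass to $G=[G,\phi]$, get solubility from Feit--Thompson, bound the Fitting height, induct down to the nilpotent case, then run the Jennings--Zassenhaus Lie algebra through Zelmanov's theorem — is indeed the right general shape, and it matches the strategy the paper uses for its generalization (Theorem \ref{2}). But there is a genuine gap, and it sits exactly at the point you yourself flag as ``the technical heart'': you never establish ad-nilpotency of the even-weight commutators, i.e.\ of the homogeneous elements of $L_+$, and the closing hope that one can ``linearise the identities \dots so as to propagate ad-nilpotency'' is not an argument. Zelmanov's theorem (Theorem \ref{3.1}) needs every commutator in the generators to be ad-nilpotent of bounded index \emph{as an operator on all of $L$}; nilpotency of class $c$ of $L_+$ as an abstract algebra says nothing about how elements of $L_+$ act on $L_-$, and the hypotheses give no a priori bound on the orders of elements of $G_\phi$, which is what Lazard's lemma would require to make homogeneous elements of $L_+$ ad-nilpotent. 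A secondary weakness: your claim that the derived-length condition ``is a polynomial identity for $L$'' is only justified for pairs of homogeneous elements coming from $G_{-\phi}$, not for arbitrary pairs in $L$, so it is not literally a PI; the robust route to a PI is that $C_L(\phi)$ is nilpotent of class $\le c$ (hence PI) and then Bahturin--Zaicev--Linchenko (Theorem \ref{3.2}, Corollary \ref{3.3}) makes $L$ itself PI.

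The missing idea is group-theoretic and must come \emph{before} any Lie theory: one first bounds the exponent of $G_\phi$ inside $[G,\phi]$, and this is where the hypothesis on $\langle x,y\rangle$ is really spent. Concretely (this is the paper's mechanism, Lemmas \ref{004} and \ref{005}): for $x,y\in G_{-\phi}$ the subgroup $K=\langle x,y\rangle$ is $\phi$-invariant (here $n=2$ and odd order are used, since $x^\phi=x^{-1}$, $y^\phi=y^{-1}$) and satisfies $K=[K,\phi]$, is soluble of derived length $\le d$, and all elements of $K_{-\phi}$ have order dividing $e$; an induction on derived length, using Lemma \ref{0000} and Mann's theorem on $\exp(G')$ versus $\exp(G/Z(G))$, shows $\exp(K)$ is $(d,e)$-bounded. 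Next, in the nilpotent case with $G=[G,\phi]$, the unique factorization $G=G_{-\phi}G_\phi$ of Lemma \ref{002} yields that $G_\phi$ is \emph{generated} by its intersections with such two-generated subgroups; since $G_\phi$ is nilpotent of class $c$ and is now generated by elements of $(d,e)$-bounded order, $\exp(G_\phi)$ is $(c,d,e)$-bounded. Only after this step does every element of $G_\phi\cup G_{-\phi}$ have bounded order, so that \emph{all} relevant homogeneous elements of $L_+\oplus L_-$ are ad-nilpotent of bounded index and the machinery you describe (Theorem \ref{1}, or for $n=2$ the result of \cite{shu11}) actually applies. Your Fitting-height induction is also glossed over — the inductive step needs something like the normal closure $N=\langle[F,\phi]^G\rangle$ together with Lemma \ref{haha} to control $N_\phi$ — but that part is repairable by routine means; the unresolved exponent-of-$G_\phi$/ad-nilpotency issue is the step that cannot be fixed within the Lie algebra alone.
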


Throughout the article we use the expression ``$(a,b,c\dots)$-bounded" to mean ``bounded from
above by some function depending only on the parameters $a,b,c\dots$". Recall that the exponent of a finite group $G$ is the minimum number $e$ such that $x^e=1$ for all $x\in G$.

The present paper will be concerned with the question of possible extension of the above theorem to the case where the automorphism has order greater than two. One important tool used in the proof of Theorem \ref{0} is the fact if $G$ is a group of odd order admitting an involutory automorphism $\phi$ such that each element in $G_\phi\cup G_{-\phi}$ has order dividing $e$, then the exponent of $G$ is $e$-bounded. This was proved in \cite{shu11} using the techniques created by Zelmanov in his solution of the restricted Burnside problem \cite{zelm} (see Section 3 of the present article for details).

Recall that an automorphism $\phi$ of a finite group $G$ is called coprime if $(|G|,|\phi|)=1$. Assume that a finite group $G$ admits a coprime automorphism $\phi$ of order $n$ such that  each element in $G_\phi\cup G_{-\phi}$ has order dividing $e$. It is unknown whether the exponent of $G$ can be bounded in terms of $e$ only. We will establish the following theorem.

\begin{theorem}\label{1} Let $e$, $n$ be positive integers, and let $G$ be a finite group admitting a coprime automorphism $\phi$ of order $n$ such that each element from $G_\phi\cup G_{-\phi}$ is contained in a $\phi$-invariant subgroup of exponent dividing $e$. Then the exponent of $G$ is $(e,n)$-bounded.
\end{theorem}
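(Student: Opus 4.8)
The plan is to reduce the problem to finite $p$-groups and then to bound the exponent of each relevant Sylow subgroup through its associated Lie algebra, using Zelmanov's techniques together with the theory of $\mathbb{Z}/n$-graded Lie algebras. First I would observe that only primes dividing $e$ can divide $|G|$. Fix a prime $p\nmid e$ and a $\phi$-invariant Sylow $p$-subgroup $P$, which exists since $\phi$ is coprime. For every $x\in P$ the element $x^{-1}x^{\phi}$ lies in $G_{-\phi}$, hence has order dividing $e$; being a $p$-element with $p\nmid e$ it is trivial, so $\phi$ acts trivially on $P$ and $P\le G_{\phi}$. Then every element of $P$ has order dividing $e$, forcing $P=1$. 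Thus $|G|$ is divisible only by the (at most $\log_2 e$) primes dividing $e$, and since the exponent of $G$ divides the product of the exponents of its Sylow subgroups, it suffices to bound the exponent of a $\phi$-invariant Sylow $p$-subgroup $P$, for each $p\mid e$, in terms of $e$ and $n$.

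Fix such a $P$, so that $(p,n)=1$, the subgroup $C_P(\phi)=P\cap G_{\phi}$ has exponent dividing $e$, every element of $P_{-\phi}$ has order dividing $e$, and $P=C_P(\phi)[P,\phi]$ with $[P,\phi]=\langle P_{-\phi}\rangle$, so that $P$ is generated by $C_P(\phi)\cup P_{-\phi}$. I would pass to the restricted Lie algebra $L=L_p(P)$ associated with the Jennings--Zassenhaus filtration $\{D_i(P)\}$, on which $\phi$ induces an automorphism of order dividing $n$. The exponent of $P$ is then controlled by the nilpotency class of $L$: since $x\in D_1(P)$ gives $x^{p^{s-1}}\in D_{p^{s-1}}(P)$, an element of order $p^s$ forces $D_{p^{s-1}}(P)\neq 1$, so if $L$ is nilpotent of class $c$ then $p^{s-1}\le c$ and $\exp(P)\le pc\le ec$. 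Hence everything reduces to bounding the class of $L$ in terms of $e$ and $n$. The generators of $L$ may be taken to be the leading homogeneous terms of a finite generating subset of $C_P(\phi)\cup P_{-\phi}$; as $\mathrm{ad}(\bar{x})^{p^s}=\mathrm{ad}(\bar{x}^{[p^s]})$ in a restricted Lie algebra and $\bar{x}^{[p^s]}$ is the leading term of $x^{p^s}=1$, each such generator is ad-nilpotent of index at most $e$.

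To bound the class of $L$ I would invoke Zelmanov's theorem \cite{zelm}: a finitely generated Lie algebra that satisfies a polynomial identity and all of whose generators and their commutators are ad-nilpotent of bounded index is nilpotent of bounded class, and the conclusion passes from finitely generated subalgebras to $L$ itself. It therefore remains to produce the polynomial identity. For this I would use the $\phi$-eigenspace decomposition: after extending scalars to a field containing a primitive $n$-th root of unity, $\phi$ induces a $\mathbb{Z}/n$-grading $L=\bigoplus_{i=0}^{n-1}L_i$ with $L_0=C_L(\phi)$. By coprimeness $L_0$ is the Lie algebra associated with $C_P(\phi)$, a $p$-group of exponent dividing $e$, so by Zelmanov's solution of the restricted Burnside problem $L_0$ satisfies a polynomial identity; the theory of finite-group-graded Lie algebras (identity component PI implies the whole algebra PI, with degree bounded in terms of $n$), together with Kreknin's theorem on $\mathbb{Z}/n$-graded Lie algebras, then yields a polynomial identity for $L$.

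The main obstacle I anticipate is verifying the full hypothesis of Zelmanov's theorem, namely that \emph{every} commutator in the chosen homogeneous generators, and not merely the generators themselves, is ad-nilpotent of $(e,n)$-bounded index. Such commutators correspond to group elements whose orders are not controlled a priori, so the bound must be extracted from the interplay between the $\mathbb{Z}/n$-grading and the hypotheses on $G_{\phi}$ and $G_{-\phi}$; it is precisely here that the assumption that each element of $G_{\phi}\cup G_{-\phi}$ lies in a $\phi$-invariant subgroup of exponent dividing $e$, rather than merely having order dividing $e$, is needed, since it allows one to confine the whole $\phi$-orbit of such an element inside a subgroup of bounded exponent. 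Once this ad-nilpotency is in place, Zelmanov's theorem bounds the class of $L$, hence $\exp(P)$, and multiplying over the finitely many primes $p\mid e$ bounds $\exp(G)$ in terms of $e$ and $n$.
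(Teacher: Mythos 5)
Your overall route coincides with the paper's (reduction to a $\phi$-invariant Sylow $p$-subgroup, the Lie algebra of the Jennings--Lazard--Zassenhaus series, Lazard's lemma for ad-nilpotency, a polynomial identity for $C_L(\phi)$ coming from the identity $x^e\equiv 1$, the Bahturin--Zaicev--Linchenko theorem, and Zelmanov's nilpotency theorem), and your opening observation that only primes dividing $e$ occur in $|G|$ is correct. But the two steps that carry the real weight are, respectively, left open and false. The step you yourself call ``the main obstacle'' --- ad-nilpotency of $(e,n)$-bounded index for \emph{every} commutator in the generators --- is exactly the heart of the proof, and your proposal contains no argument for it, only the assertion that the hypothesis ``must'' make it work. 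The paper's resolution is concrete: it shows that every $\phi$-eigenvector $a$ in \emph{every} homogeneous component $\overline{L}_i$ (not just in degree one) is ad-nilpotent of bounded index. Indeed, $a\in\overline{K}$ for a minimal $\phi$-invariant subspace $K\leq L_i$, and such a $K$ is generated as an $\mathbb{F}_p\langle\phi\rangle$-module by the image $x^*$ of a single group element $x$ lying in $G_\phi$ or in $G_{-\phi}$; the minimal $\phi$-invariant subgroup $X$ containing $x$ is $n$-generated of exponent dividing $e$, hence of $(e,n)$-bounded \emph{order} by the solution of the restricted Burnside problem, and then Lemma \ref{4.7} yields $[L,\underbrace{K,\dots,K}_{u}]=0$ for $(e,n)$-bounded $u$. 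Since a commutator of homogeneous eigenvectors is again a homogeneous eigenvector, this one statement covers all commutators at once; nothing in your sketch substitutes for it, and you never invoke RBP where it is actually needed.

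The second gap is fatal as written: from ``$L$ nilpotent of class $c$'' you deduce $p^{s-1}\leq c$ for an element of order $p^s$, which implicitly claims that nilpotency of $L_p(P)$ of class $c$ forces $D_{c+1}(P)=1$. That is false: for $P$ cyclic of order $p^2$ the algebra $L_p(P)$ is one-dimensional (abelian), yet $D_2(P)=P^p\neq 1$ and $\exp(P)=p^2>p\cdot 1$. The components $D_i/D_{i+1}$ survive in high degree because of $p$-th powers, not brackets, so the class of $L$ alone says nothing about where the Jennings series stops. The correct deduction is the paper's: by Riley's theorem (Lemma \ref{4.9}), nilpotency of $L_p(P)$ of class $c$ makes $H=D_{c+1}(P)$ \emph{powerful}; by Lemma \ref{002} (a statement that itself requires proof) $H=H_\phi H_{-\phi}$, so $H$ is generated by elements of order dividing $e$, whence $\exp(H)$ divides $e$ by the Lubotzky--Mann theory of powerful $p$-groups, while $\exp(P/H)$ divides $p^c$. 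Finally, note that your $L$ is not finitely generated (you generate $P$ by all of $C_P(\phi)\cup P_{-\phi}$), and ``passing from finitely generated subalgebras to $L$'' gives only local nilpotency, not a uniform class; the paper avoids this by first replacing $G$ with the minimal $\phi$-invariant subgroup containing a given element, which is generated by at most $n$ elements, so that $\overline{L}$ is generated by at most $n$ eigenvectors and Theorem \ref{3.1} applies with a bound.
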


Note that in the case where $n=2$ we have $x^\phi=x^{-1}$ for all $x\in G_{-\phi}$ and therefore any subgroup generated by a subset of $G_{-\phi}$ is $\phi$-invariant. We use Theorem \ref{1} to prove the following result, which is an extension of Theorem \ref{0}.

\begin{theorem} \label{2} Let $G$ be a finite group admitting a coprime automorphism $\phi$ of order $n$ such that $G_{\phi}$ is nilpotent of class $c$ and $x^{e}=1$ for each $x\in G_{-\phi}$. Suppose that any two elements of $G_{-\phi}$ are contained in a $\phi$-invariant soluble subgroup of derived length $d$. Then the exponent of $[G,\phi]$ is bounded in terms of $c,d,e,n$.
\end{theorem}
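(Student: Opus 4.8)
The plan is to reduce everything to a single application of Theorem \ref{1} to the subgroup $K=[G,\phi]$. The first step is a standard coprime reduction: since $(|G|,n)=1$ we have $G=C_G(\phi)[G,\phi]$, and writing an arbitrary $x\in G$ as $x=gh$ with $g\in C_G(\phi)$ and $h\in K$ one computes $x^{-1}x^\phi=h^{-1}h^\phi$, so that $G_{-\phi}=K_{-\phi}$. Moreover $[K,\phi]=[[G,\phi],\phi]=[G,\phi]=K$, and $K_\phi=K\cap G_\phi$ is again nilpotent of class at most $c$, while the order bound on $G_{-\phi}$ and the pairwise solubility hypothesis are inherited by $K$. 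Hence I may assume $G=K=[G,\phi]$, and the goal becomes a bound on the exponent of $K$ itself.

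Next I would aim to verify, with a parameter $e'$ bounded in terms of $c,d,e,n$, the hypothesis of Theorem \ref{1} for the pair $(K,\phi)$: every element of $K_\phi\cup K_{-\phi}$ should lie in a $\phi$-invariant subgroup of exponent dividing $e'$. For the elements of $K_{-\phi}=G_{-\phi}$ this is the more tractable half. Such an $x$ has order dividing $e$, the set $K_{-\phi}$ is $\phi$-invariant, so $x,x^\phi,\dots,x^{\phi^{n-1}}$ all lie in $K_{-\phi}$ and the subgroup $W_x$ they generate is $\phi$-invariant and boundedly generated; the pairwise solubility hypothesis, combined with the bounded number of generators, should bound the exponent of $W_x$ (this half is close in spirit to Theorem \ref{1} itself, to which one may appeal on the boundedly generated pieces, or which one may set up as an induction on derived length).

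The crux, and the main obstacle, is the fixed-point subgroup $K_\phi=C_K(\phi)$, which a priori has unbounded exponent: we are only told it is nilpotent of class $c$, not that its elements have bounded order. This is exactly where the present situation is genuinely harder than the involutory case of Theorem \ref{0}, in which the $\phi$-invariance of subgroups generated by $G_{-\phi}$ comes for free from inversion. To control the exponent of $C_K(\phi)$ I would combine two ingredients: first, the elementary fact that a nilpotent group of class $c$ generated by elements of order dividing $t$ has $(c,t)$-bounded exponent; and second, coprime commutator calculus expressing generators of $C_K(\phi)$ through the bounded-order elements of $K_{-\phi}$, organised as an induction on the derived length of $K$, which the nilpotency of $K_\phi$ and the pairwise solubility hypothesis show, via known coprime-action results, to be bounded in terms of $c,d,n$. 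Once $C_K(\phi)$ is shown to have bounded exponent, both halves of the hypothesis of Theorem \ref{1} hold with a bounded $e'$, and a single application of that theorem yields the desired $(c,d,e,n)$-bound on the exponent of $[G,\phi]$.
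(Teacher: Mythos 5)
Your opening reduction to $K=[G,\phi]$ is correct (and matches the paper's ``without loss of generality $G=[G,\phi]$''), and your treatment of the elements of $K_{-\phi}$ is essentially the paper's first step: for $x\in G_{-\phi}$ the pair $(x,x)$ places the minimal $\phi$-invariant subgroup containing $x$ inside a soluble group of derived length at most $d$, and an induction on derived length \emph{of that piece} (the paper's Lemma \ref{004}, which uses Mann's theorem) bounds its exponent. The genuine gap is at what you yourself identify as the crux, the exponent of $C_K(\phi)$, and it is twofold. First, your induction is ``on the derived length of $K$'', which you assert is $(c,d,n)$-bounded ``via known coprime-action results''. No such result exists: what coprimeness gives is solubility of $K$ (Theorem \ref{wa}, Wang--Chen, via CFSG) and an $n$-bounded \emph{Fitting height} (Theorem \ref{tho}, Thompson). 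Fitting height does not control derived length --- a nilpotent group has Fitting height $1$ and arbitrarily large derived length --- and the hypothesis on pairs of elements of $G_{-\phi}$ says nothing about the derived length of the whole group. In fact, if the derived length of $K$ were $(c,d,n)$-bounded, the entire theorem would follow in one line: since $K=[K,\phi]$ and every element of $K_{-\phi}=G_{-\phi}$ has order dividing $e$, Lemma \ref{004} applied to $K$ would immediately bound its exponent, making the rest of the paper unnecessary. That the authors need a much more elaborate argument is a strong sign this intermediate claim is not available. Second, the ``coprime commutator calculus expressing generators of $C_K(\phi)$ through the bounded-order elements of $K_{-\phi}$'' is precisely the paper's Lemma \ref{005}, and its proof rests on the unique factorization $K=K_{-\phi}K_\phi$ of Lemma \ref{002}, which is valid only for \emph{nilpotent} $K$: the Glauberman example in Section 2 shows that $G\neq G_{-\phi}G_\phi$ can fail for soluble $G$. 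So neither ingredient of your plan for $C_K(\phi)$ works outside the nilpotent case.

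What the paper does instead is to induct on the Fitting height of $K$ (which Thompson's theorem does bound) and to apply Theorem \ref{1} repeatedly inside the induction rather than once at the end. The base case, $K$ nilpotent, is exactly where your plan is valid: there Lemma \ref{002} holds, Lemma \ref{005} shows $K_\phi$ is generated by the intersections $K_\phi\cap\langle x_1,x_2\rangle^{\langle\phi\rangle}$, whose elements have $(d,e)$-bounded order, and nilpotency of class $c$ converts bounded-order generators into bounded exponent, after which Theorem \ref{1} applies. For Fitting height at least $2$ the paper sets $F=F(G)$ and $N=\langle[F,\phi]^G\rangle$, observes via Lemma \ref{0000} that $G/N$ has smaller Fitting height (so induction bounds its exponent), and then bounds the exponent of $N_\phi$ by writing it, using Hartley's Lemma \ref{haha}, as a product of the fixed-point subgroups of the bounded-exponent pieces $N_x$, before invoking Theorem \ref{1} for $N$ itself. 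If you want to salvage your outline, replace the induction on derived length by induction on Fitting height and accept that the hypothesis of Theorem \ref{1} must be verified separately at each level of that induction.
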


In the next section we discuss some general properties of finite groups admitting a coprime automorphism. Section 3 provides background information for applications of Lie methods in group theory. This is used in Section 4 to prove Theorem \ref{1}. In Section 5 we prove Theorem \ref{2}.

\section{Preliminaries}

The first lemma is a collection of well-known facts about coprime automorphisms (see for example \cite[6.2.2, 6.2.4]{go}). 

\begin{lemma}\label{000} Let $\phi$ be a coprime automorphism of a finite group $G$.
\begin{enumerate}
\item $[G,\phi]=[G,\phi,\phi]$;
\item If $N$ is a $\phi$-invariant normal subgroup of $G$, we have $(G/N)_\phi=G_\phi N/N$;
\item  If $H$ is a $\phi$-invariant $p$-subgroup of $G$, then $H$ is contained in a $\phi$-invariant Sylow $p$-subgroup of $G$.
\end{enumerate}
\end{lemma}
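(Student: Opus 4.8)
All three assertions are standard consequences of coprime action, and I would deduce them inside the semidirect product $\Gamma=G\rtimes\langle\phi\rangle$, in which $\langle\phi\rangle$ is a Hall $\pi$-subgroup ($\pi$ the set of primes dividing $n$) precisely because $(|G|,n)=1$. The two engines are the Schur--Zassenhaus theorem (existence and conjugacy of complements to a normal Hall subgroup) and Glauberman's lemma on coprime actions on a transitive set; both are classical. Viewing $\phi\in\Gamma$, conjugation gives the identity $\phi^{g}=g^{-1}\phi g=(g^{-1}g^{\phi})\phi=[g,\phi]\,\phi$ for every $g\in G$, which I shall use repeatedly. For $(1)$, set $K=[G,\phi]$ and pass to $\overline G=G/[K,\phi]$. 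Then $\phi$ centralises $\overline K=[\overline G,\phi]$, so for each $\bar g$ the element $[\bar g,\phi]$ lies in $\overline K$ and is fixed by $\phi$. From $\bar g^{\phi}=\bar g\,[\bar g,\phi]$ an immediate induction yields $\bar g^{\phi^{k}}=\bar g\,[\bar g,\phi]^{k}$; taking $k=n$ and using $\bar g^{\phi^{n}}=\bar g$ gives $[\bar g,\phi]^{n}=1$. As $[\bar g,\phi]$ has order dividing $|G|$, which is prime to $n$, we get $[\bar g,\phi]=1$ for all $\bar g$, whence $\overline K=1$; this means $K=[K,\phi]$, i.e. $[G,\phi]=[G,\phi,\phi]$.

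For $(2)$ the inclusion $G_\phi N/N\subseteq(G/N)_\phi$ is obvious. Conversely, take $g$ with $[g,\phi]\in N$, so that $\phi^{g}=[g,\phi]\phi\in N\langle\phi\rangle$. Thus $\langle\phi\rangle$ and $\langle\phi\rangle^{g}$ are two complements to the normal Hall subgroup $N$ in $N\langle\phi\rangle$, and by Schur--Zassenhaus they are conjugate by some $y\in N$. Then $w=gy^{-1}$ satisfies $wN=gN$ and normalises $\langle\phi\rangle$; since $\phi^{w}=[w,\phi]\phi$ with $[w,\phi]\in G\cap\langle\phi\rangle=1$, we conclude $w\in G_\phi$, proving $(G/N)_\phi\subseteq G_\phi N/N$.

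For $(3)$ I would first obtain existence and conjugacy: applying Glauberman's lemma to the coprime action of $\langle\phi\rangle$ on $G$ and the transitive $G$-set $\mathrm{Syl}_p(G)$ produces a $\phi$-invariant Sylow $p$-subgroup and shows that $G_\phi$ acts transitively on the $\phi$-invariant ones. The containment statement I would then prove by repeated enlargement: given a $\phi$-invariant $p$-subgroup $H$ that is not already Sylow, the quotient $N_G(H)/H$ has order divisible by $p$ (normalisers grow in $p$-groups), so by the existence part it has a nontrivial $\phi$-invariant Sylow $p$-subgroup, whose preimage $H_1$ is a $\phi$-invariant $p$-subgroup with $H<H_1$. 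Iterating, the orders strictly increase and the process terminates at a $\phi$-invariant $p$-subgroup of $G$ containing $H$ that admits no proper enlargement, hence is Sylow.

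The only genuinely delicate point is $(3)$: one must check that Glauberman's lemma applies --- the compatibility $(P^{g})^{\phi}=(P^{\phi})^{g^{\phi}}$ and the coprimality are exactly what is needed --- and that enlargement preserves both $\phi$-invariance and the containment of $H$ at every stage. The hypothesis $(|G|,n)=1$ is used crucially at each appeal to Glauberman's lemma and to Schur--Zassenhaus; parts $(1)$ and $(2)$ are short and self-contained once the commutator identity $\phi^{g}=[g,\phi]\phi$ is in hand.
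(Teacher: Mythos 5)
The paper offers no proof of this lemma at all --- it simply cites Gorenstein \cite[6.2.2, 6.2.4]{go} --- so your attempt can only be measured against the standard arguments, and for parts (2) and (3) you have reproduced them correctly. For (2), the observation that $\langle\phi\rangle$ and $\langle\phi\rangle^{g}$ are complements to the normal Hall subgroup $N$ of $N\langle\phi\rangle$, conjugate by an element of $N$ (conjugacy in Schur--Zassenhaus is unconditional here since the complement is cyclic, hence soluble), is exactly the textbook proof; for (3), Glauberman's lemma for existence plus the normaliser-growth iteration for containment is likewise the classical route, and your checks of compatibility and coprimality are the right ones. The convention issue in your identity $\phi^{g}=[g,\phi]\phi$ (it requires $g^{\phi}=\phi g\phi^{-1}$ inside $G\langle\phi\rangle$; with the opposite convention one gets $\phi^{g}=(g^{-1}g^{\phi^{-1}})\phi$) is harmless, since either version supports every use you make of it.

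Part (1), however, contains a genuine gap: passing to $\overline G=G/[K,\phi]$ presupposes that $[K,\phi]$ is normal in $G$, and nothing in your argument justifies this. The subgroup $K=[G,\phi]$ itself is normal in $G$ thanks to the identity $[xy,\phi]=[x,\phi]^{y}[y,\phi]$, where $x,y$ range over \emph{all} of $G$; the same trick fails for $[K,\phi]$ because $K$ is proper. Concretely, conjugating a generator gives $[k,\phi]^{g}=[k^{g},\phi^{g}]$ with $\phi^{g}=c\phi$ for some $c\in K$, and then $[k^{g},c\phi]=[k^{g},\phi][k^{g},c]^{\phi}$ only shows $[K,\phi]^{g}\leq [K,\phi]K'$, with no a priori reason why $K'\leq[K,\phi]$. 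Indeed, normality of $[K,\phi]$ in $G$ is a posteriori equivalent to the lemma itself (since the lemma yields $[K,\phi]=K$), so assuming the quotient exists is close to circular. What your quotient computation genuinely proves is the special case: if $\phi$ centralises $[G,\phi]$, then the relation $\bar g^{\phi^{n}}=\bar g[\bar g,\phi]^{n}$ together with coprimality forces $[G,\phi]=1$. The standard repair routes the reduction through your own part (2): since $\phi$ acts trivially on $G/[G,\phi]$, part (2) applied with $N=[G,\phi]$ gives $G=G_{\phi}[G,\phi]$; then for $g=ch$ with $c\in G_{\phi}$ and $h\in K$ one computes $[g,\phi]=[c,\phi]^{h}[h,\phi]=[h,\phi]\in[K,\phi]$, so every generator of $[G,\phi]$ lies in $[G,\phi,\phi]$, and the reverse inclusion is trivial. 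With that substitution your proof of the whole lemma is complete and matches the classical treatment the paper points to.
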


We will also require the following fact.

\begin{lemma}\label{0000} Let $\phi$ be a coprime automorphism of a finite group $G$.
If $N$ is a $\phi$-invariant normal subgroup of $G$ contained in $G_{\phi}$, then $[G,\phi]$ centralizes $N$.
\end{lemma}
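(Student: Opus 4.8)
The plan is to work with the conjugation action of $G$ on $N$. Since $N$ is normal in $G$, conjugation yields a homomorphism $\rho\colon G\to \mathrm{Aut}(N)$ whose kernel is precisely the centralizer $C_G(N)$. As $[G,\phi]$ is generated by the set $G_{-\phi}=\{g^{-1}g^\phi\colon g\in G\}$ and $C_G(N)$ is a subgroup, it suffices to show that every such generator lies in $\ker\rho$; equivalently, that $\rho(g^{-1}g^\phi)=1$ for all $g\in G$. This reduces the statement ``$[G,\phi]$ centralizes $N$'' to a single elementwise identity.

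The key computation is the claim that $a^g=a^{g^\phi}$ for all $a\in N$ and $g\in G$, which I would establish as follows. Fix $a\in N$ and $g\in G$. Because $a\in N\subseteq G_\phi$ we have $a^\phi=a$, and since $\phi$ is an automorphism it distributes over products, giving $(a^g)^\phi=(g^{-1}ag)^\phi=(g^\phi)^{-1}a^\phi g^\phi=(g^\phi)^{-1}a\,g^\phi=a^{g^\phi}$. On the other hand $N$ is normal, so $a^g\in N\subseteq G_\phi$, which forces $(a^g)^\phi=a^g$. Comparing the two expressions yields $a^g=a^{g^\phi}$, that is, $g$ and $g^\phi$ induce the same automorphism of $N$. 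Hence $\rho(g)=\rho(g^\phi)$, and therefore $\rho(g^{-1}g^\phi)=\rho(g)^{-1}\rho(g^\phi)=1$, as required; so $[G,\phi]\subseteq\ker\rho=C_G(N)$.

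There is essentially no serious obstacle here: the argument is purely formal once one observes that the hypotheses $N\trianglelefteq G$, $N$ $\phi$-invariant, and $N\subseteq G_\phi$ together force $a^g$ to be fixed by $\phi$ in two independent ways, pinning down $\rho(g)=\rho(g^\phi)$. It is worth remarking that this proof does not actually use coprimeness of $\phi$, holding for an arbitrary automorphism fixing $N$ pointwise; the only point genuinely requiring care is the bookkeeping showing that all generators $g^{-1}g^\phi$ of $[G,\phi]$ land in the kernel of $\rho$, which the homomorphism formulation handles automatically.
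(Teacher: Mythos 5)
Your proposal is correct and is essentially the paper's own proof: the core computation $(a^g)^\phi=a^{g^\phi}$ together with $a^g\in N\subseteq G_\phi$ forcing $a^g=a^{g^\phi}$ is exactly the paper's step $x^y=(x^y)^\phi=x^{y^\phi}$, and packaging it via the conjugation homomorphism $\rho$ is only a cosmetic reformulation of the paper's remark that the elements $yy^{-\phi}$ generate $[G,\phi]$. Your observation that coprimeness is never used is accurate and applies equally to the paper's argument.
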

\begin{proof} Let $x\in N$ and $y\in G$. Both elements $x$ and $x^y$ lie in $G_\phi$. We have $x^y=(x^y)^\phi=x^{y^\phi}$, whence $x=x^{yy^{-\phi}}$. Since elements of the form $yy^{-\phi}$ generate $[G,\phi]$, the lemma follows.
\end{proof}
It is well-known that if $G$ is of odd order and $\phi$ is of order two, then $G=G_{-\phi}G_\phi$. We will now examine the question whether the equality $G=G_{-\phi}G_\phi$ holds for
any coprime automorphism $\phi$. In what follows we will see that in general this equality 
may fail. We however will establish that the equality $G=G_{-\phi}G_\phi$ does verify if $G$
is nilpotent.

\begin{lemma}\label{001} Let $\phi$ be a coprime automorphism of a finite group $G$. Then
$G=G_{-\phi}G_\phi$ if and only if no nontrivial element of $G_{-\phi}$ has conjugates in $G_\phi$.
\end{lemma}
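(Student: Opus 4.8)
The plan is to turn the set-theoretic equality $G=G_{-\phi}G_\phi$ into a transversal statement and then connect that transversal statement to conjugacy via a single explicit identity. First I would analyze the map $\theta\colon G\to G$, $\theta(x)=x^{-1}x^\phi$, whose image is exactly $G_{-\phi}$. A direct check shows $\theta(x)=\theta(y)$ if and only if $yx^{-1}\in G_\phi$ (and in particular $\theta(x)=1$ iff $x\in G_\phi$), so the fibres of $\theta$ are cosets of $G_\phi$ and hence $|G_{-\phi}|=[G:G_\phi]$. Since $G_{-\phi}G_\phi=\bigcup_{s\in G_{-\phi}}sG_\phi$ is a union of right cosets of $G_\phi$, the product fills $G$ exactly when $G_{-\phi}$ meets every right coset; because $|G_{-\phi}|=[G:G_\phi]$ equals the number of right cosets, a counting argument shows this happens precisely when distinct elements of $G_{-\phi}$ lie in distinct right cosets. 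Thus $G=G_{-\phi}G_\phi$ is equivalent to the implication ``$s_1,s_2\in G_{-\phi}$ with $s_1^{-1}s_2\in G_\phi$ forces $s_1=s_2$.''

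The crux is then a bridge between such a coset coincidence and conjugacy into $G_\phi$. I would establish the identity that for all $x,h\in G$, writing $g=\theta(x)=x^{-1}x^\phi$, one has
\[
\theta(xh)^{-1}\theta(h)=\bigl(g^{h^\phi}\bigr)^{-1},
\]
together with the observation that $\theta(xh)=\theta(h)$ if and only if $g=1$. This identity converts the two directions into routine arguments. For one implication (contrapositive): if some nontrivial $g=\theta(x)\in G_{-\phi}$ satisfies $g^{h_0}\in G_\phi$, set $h=h_0^{\phi^{-1}}$ and take $s_1=\theta(xh)$, $s_2=\theta(h)$; the identity gives $s_1^{-1}s_2=(g^{h_0})^{-1}\in G_\phi$ while $s_1\neq s_2$ since $g\neq1$, so the right cosets coincide and $G\neq G_{-\phi}G_\phi$. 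Conversely, if $G\neq G_{-\phi}G_\phi$, the counting step produces $s_1\neq s_2$ in $G_{-\phi}$ with $s_1^{-1}s_2\in G_\phi$; writing $s_2=\theta(h)$ and $s_1=\theta(xh)$ with a suitable $x$ (so that $g=\theta(x)\neq1$), the same identity yields $g^{h^\phi}\in G_\phi$, exhibiting a nontrivial element of $G_{-\phi}$ conjugate into $G_\phi$.

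The main obstacle, in my view, is not any of the individual verifications but rather locating the correct pairing: one must pair $s_1=\theta(xh)$ with $s_2=\theta(h)$ (rather than, say, comparing $g$ with $g^\phi$), and one must track carefully that the equality $G=G_{-\phi}G_\phi$ concerns \emph{right} cosets $sG_\phi$, since $G_{-\phi}$ need not be symmetric and the left-coset variant is genuinely different. Once the identity above is written down, both the cardinality reduction and the equivalence follow with only elementary manipulation; I would also note that coprimeness of $\phi$ is not actually needed for this particular lemma (the fibre computation and the identity hold for any finite-order automorphism), though we may of course invoke it since it is in force throughout.
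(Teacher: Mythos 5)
Your proposal is correct and is essentially the paper's own argument: the counting step ($|G_{-\phi}|=[G:G_\phi]$, hence $|G|=|G_{-\phi}||G_\phi|$ and the coset/unique-factorization dichotomy) matches the paper's, and your identity $\theta(xh)^{-1}\theta(h)=\bigl(g^{h^\phi}\bigr)^{-1}$, after the substitution $h=h_0^{\phi^{-1}}$, is literally the paper's displayed equation $c^{\phi^{-1}}b^{-1}b^\phi c^{-1}=c^{\phi^{-1}}c^{-1}a$ (with $b=x$, $c=h_0^{-1}$), while your converse direction reproduces the paper's element $yx^{-1}x^\phi y^{-\phi}$. Your side observation that coprimeness of $\phi$ is never used here is also accurate.
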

\begin{proof} Suppose first that $G=G_{-\phi}G_\phi$.
Since $|G|=|G_{-\phi}||G_\phi|$, it follows that each
element $x\in G$ can be written uniquely in the
form $x=gh$, where $g\in G_{-\phi}$ and $h\in G_\phi$.
Suppose that there exist elements $1\neq a\in G_\phi$
and $b,c\in G$ such that $$b^{-1}b^\phi=c^{-1}ac.$$
This implies $$c^{\phi^{-1}}b^{-1}b^\phi c^{-1}=
c^{\phi^{-1}}c^{-1}a.$$ Therefore there exist at least
two ways to write the element $x=c^{\phi^{-1}}b^{-1}
b^\phi c^{-1}$ in the form $x=gh$ with $g\in G_{-\phi}$
and $h\in G_\phi$, a contradiction.

Assume now that no nontrivial element of $G_{-\phi}$
has conjugates in $G_{\phi}$. We want to prove that
$G=G_{-\phi}G_\phi$. Suppose that this is false and
so, since $|G|=|G_{-\phi}||G_\phi|$, we
have $gh=g_1h_1$ for some pairwise distinct elements
$g,g_1\in G_{-\phi}$ and $h,h_1\in G_{\phi}$. 
It follows that there exist elements $x,y\in G$ and $1 \neq a
\in G_{\phi}$ such that $x^{-1}x^\phi=y^{-1}y^\phi a$.
Then $yx^{-1}x^\phi y^{-\phi}$ lies in $G_{-\phi}$ and
is a conjugate of $a$. This yields a contradiction.
\end{proof}

The following example was communicated to the second author by G. Glauberman. It shows that in general $G\neq G_{-\phi}G_\phi$.
\bigskip

{\bf Example.} Let $K$ be the field with $5^3$ elements and let $\phi$ be the automorphism
of order 3 of $K$ sending each $x\in K$ to $x^5$. Denote by $A$ the additive group of $K$ and by $B$ the multiplicative group. Let $G$ be the natural semidirect product $G=AB$ and note firstly that $B$ acts transitively on $A\setminus 0$ and secondly that $\phi$ induces a coprime automorphism of $G$. Obviously $A_\phi$ is a proper subgroup of $A$ and $A_{-\phi}\neq0$. We see that all nontrivial elements of $A_{-\phi}$ are conjugate in $G$ with some elements in $A_\phi$. Lemma \ref {001} shows that $G\neq G_{-\phi}G_\phi$.
\bigskip

Obviously, the group $G$ in the above example is not nilpotent. The next lemma shows that no examples of this kind can be found among nilpotent groups.

\begin{lemma}\label{002} Let $\phi$ be a coprime
automorphism of a finite nilpotent group $G$. Then any
element $x\in G$ can be written uniquely in the form
$x=gh$, where $g\in G_{-\phi}$ and $h\in G_{\phi}$. 
\end{lemma}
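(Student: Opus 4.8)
The plan is to reduce the statement to an existence assertion and then prove existence by induction on $|G|$. By Lemma \ref{001}, together with the cardinality identity $|G|=|G_{-\phi}||G_\phi|$ (which records that the fibres of the map $x\mapsto x^{-1}x^\phi$ are precisely the cosets of $G_\phi$), once we know that $G=G_{-\phi}G_\phi$ the uniqueness is automatic: the multiplication map $G_{-\phi}\times G_\phi\to G$ is then a surjection between finite sets of equal size, hence a bijection. So the whole content of the lemma is the existence of the factorisation, and that is what I would prove.

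First I would set up the induction on $|G|$, the case $G=1$ being trivial. For the inductive step, let $Z=Z(G)$, which is nontrivial since $G$ is nilpotent and is $\phi$-invariant. As $Z$ is abelian and $\phi$ acts coprimely on it, the map $u\mapsto u^{-1}u^\phi$ is an endomorphism of $Z$ whose image $Z_{-\phi}$ is a subgroup, and one has the direct decomposition $Z=Z_{-\phi}\times Z_\phi$. Passing to $\overline{G}=G/Z$, which is nilpotent of smaller order, the induction hypothesis gives $\overline{G}=\overline{G}_{-\phi}\,\overline{G}_\phi$. I would now lift a factorisation of the image of a given $x$: choosing $\overline{w}\in\overline{G}$ with $\overline{w}^{-1}\overline{w}^\phi$ equal to the $\overline{G}_{-\phi}$-part and a preimage $y\in G$ of $\overline{w}$, the element $g_0:=y^{-1}y^\phi$ lies in $G_{-\phi}$ and has the correct image; for the other factor I would use Lemma \ref{000}(2), which gives $\overline{G}_\phi=G_\phi Z/Z$ and hence a genuine $h\in G_\phi$ with the correct image. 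Thus $x=g_0hz$ for some $z\in Z$.

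It remains to correct by the central element $z$, and this is the one step that needs an actual computation. Writing $z=z_-z_+$ with $z_-\in Z_{-\phi}$ and $z_+\in Z_\phi$ and using that both factors are central, I would rearrange $x=g_0hz=(g_0z_-)(hz_+)$, where $hz_+\in G_\phi$ since $G_\phi$ is a subgroup containing $z_+$. The key point, and the main obstacle, is to check that $g_0z_-\in G_{-\phi}$: writing $g_0=y^{-1}y^\phi$ and $z_-=u^{-1}u^\phi$ with $u\in Z$, the centrality of $u$ gives $(uy)^{-1}(uy)^\phi=y^{-1}u^{-1}u^\phi y^\phi=z_-g_0=g_0z_-$, so $g_0z_-$ is again of the form $v^{-1}v^\phi$. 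This exhibits $x$ as a product of an element of $G_{-\phi}$ and an element of $G_\phi$, completing the inductive step and hence the proof.
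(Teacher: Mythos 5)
Your proof is correct, but it takes a genuinely different route from the paper's. You argue constructively: after noting that the counting identity $|G|=|G_{-\phi}||G_\phi|$ reduces uniqueness to existence (the paper uses the same identity inside Lemma \ref{001}), you induct on $|G|$ through the center, lifting a factorisation from $G/Z(G)$ --- via Lemma \ref{000}(2) for the fixed-point factor and via $\overline{y^{-1}y^\phi}=\overline{y}^{-1}\overline{y}^{\,\phi}$ for the $G_{-\phi}$-factor --- and then absorb the central error term using the standard coprime splitting $Z=Z_{-\phi}\times Z_\phi$ of the abelian group $Z=Z(G)$, with the computation $(uy)^{-1}(uy)^\phi=(y^{-1}y^\phi)(u^{-1}u^\phi)$ for central $u$ doing the essential work; all these steps check out. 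The paper instead argues by contradiction: in a minimal counterexample, Lemma \ref{001} produces a nontrivial $h\in G_\phi$ conjugate to some $x^{-1}x^\phi$; minimality applied to $G/Z(G)$ pushes everything into the center, forcing $x^{-1}x^\phi=h$; and the contradiction comes from an order comparison in the semidirect product $G\langle\phi\rangle$, where $x^{-1}x^\phi\phi^{-1}=x^{-1}\phi^{-1}x$ is conjugate to $\phi^{-1}$ and so has order $n$, while $h\phi^{-1}$ has order $|h|n\neq n$ by coprimality. Your version buys explicitness --- it actually constructs the factorisation and needs only the counting fact, not the conjugacy criterion of Lemma \ref{001} --- at the cost of invoking the abelian decomposition $Z=Z_{-\phi}\times Z_\phi$ (which you should cite or prove, e.g.\ via the norm map argument, though it is entirely standard for coprime actions); the paper's version buys brevity, outsourcing the reduction to Lemma \ref{001} and finishing with a one-line order trick in $G\langle\phi\rangle$.
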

\begin{proof}  Assume that $G$ is a counterexample of
minimal possible order. Then, by virtue of Lemma
\ref{001}, we can choose elements $x\in G$ and
$1\neq h\in G_\phi$ such that $x^{-1}x^\phi$ and $h$
are conjugate. By the induction hypothesis $G/Z(G)$
is not a counterexample to the lemma, so it follows
that $h\in Z(G)$. Hence, $x^{-1}x^\phi=h.$ Then in
the natural semidirect product $G\langle\phi\rangle$
we have $$x^{-1}x^\phi\phi^{-1}=h\phi^{-1}.$$ 
Here the element on the left hand side is conjugate to $\phi^{-1}$ and so is of order $n$, where $n$ is the order of $\phi$.
Since  $h\in G_{\phi}$, the element on the right hand side has order $|h|n\neq n$. This is a contradiction.
\end{proof}

\section{Some Lie-theoretic machinery}

Our proof of Theorem \ref{1} relies heavily on the techniques created by Zelmanov in his solution of the restricted Burnside problem. In the present section, for the reader's convenience, we describe some Lie-theoretical tools used in the proof. More information on the subject can be found in \cite{almgt}.

Let $L$ be a Lie algebra over a field ${\mathfrak k}$.
Let $k,n$ be positive integers and let 
$x_1,x_2,\dots,x_k,x,y$ be elements 
of $L$. We define inductively
$$[x_1]=x_1;\ [x_1,x_2,\dots,x_k]=[[x_1,x_2,\dots,x_{k-1}],x_k],$$
and
$$[x,{}_0y]=x;\ [x,{}_ny]=[[x,{}_{n-1}y],y].$$
An element $a\in L$ is
called ad-nilpotent if there exists a positive integer $n$ such that
$[x,{}_na]=0$ for all $x\in L$. If $n$ is the least 
integer with the above property then we say that $a$ is
ad-nilpotent of index $n$. Let $X\subseteq L$ be any subset
of $L$. By a commutator in elements of $X$
we mean any element of $L$ that can be obtained as a Lie
product of elements of $X$ with some system of brackets.
Denote by $F$ the free Lie algebra over 
${\mathfrak k}$ on countably many free generators $x_1,x_2,\dots$.
Let $f=f(x_1,x_2,\dots,x_n)$ be a non-zero element of $F$.
The algebra $L$ is said to satisfy
the identity $f\equiv 0$ if $f(a_1,a_2,\dots,a_n)=0$
for any $a_1,a_2,\dots,a_n\in L$. In this case we say
that $L$ is PI. A deep result of Zelmanov \cite[III(0.4)]{zelm} says that
if a Lie algebra $L$ is PI and is generated by
finitely many elements all commutators in which
are ad-nilpotent then $L$ is nilpotent. A detailed proof of this theorem was published in
\cite{zelmanov}. Using this and some routine
universal arguments, the next theorem can be deduced
(see \cite{khushu}).
\begin{theorem}\label{3.1} Let $L$ be a Lie algebra
over a field ${\mathfrak k}$ generated by 
$a_1,a_2,\dots,a_m$. Assume that $L$ satisfies an
identity $f\equiv 0$ and that
each commutator in the generators
$a_1,a_2,\dots,a_m$ is ad-nilpotent of index 
at most $n$. Then $L$ is nilpotent of 
$(f,m,n,{\mathfrak k})$-bounded class.
\end{theorem}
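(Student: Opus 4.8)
The plan is to deduce the uniform bound from Zelmanov's theorem by a single universal construction. Zelmanov's theorem guarantees that each individual algebra satisfying the hypotheses is nilpotent, but it gives no a priori control on the class; to obtain a bound depending only on $f,m,n,{\mathfrak k}$ I would exhibit one ``largest'' algebra $U$ in which these hypotheses hold and of which every $L$ as in the statement is a homomorphic image.

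Concretely, first I would take the free Lie algebra $F_m$ over ${\mathfrak k}$ on $m$ generators $a_1,\dots,a_m$ and pass to the relatively free algebra $R=F_m/T$ of the variety defined by $f\equiv0$, where $T$ is the verbal (fully invariant) ideal generated by $f$; by construction $R$ is generated by $m$ elements and satisfies $f\equiv0$ as an identity. Next, to encode the ad-nilpotency hypothesis uniformly, I would let $J$ be the ideal of $R$ generated by all elements $[w,{}_nc]$, where $c$ runs over the (countably many) commutators in $a_1,\dots,a_m$ and $w$ runs over $R$, and set $U=R/J$. Since $U$ is a quotient of $R$ it is still generated by $m$ elements and still satisfies $f\equiv0$; and because the iterated commutator $[w,{}_nc]$ is linear in $w$, the relations $[w,{}_nc]=0$ forced in $U$ for every $w$ say exactly that each commutator $c$ in the generators is ad-nilpotent of index at most $n$ in $U$.

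Thus $U$ is a finitely generated PI Lie algebra all of whose commutators in the generators are ad-nilpotent, so Zelmanov's theorem applies and $U$ is nilpotent; let $C$ be its class. This $C$ depends only on the data $f,m,n,{\mathfrak k}$ used to build $U$. Finally I would invoke universality: given any $L$ as in the statement, generated by $b_1,\dots,b_m$, the assignment $a_i\mapsto b_i$ extends to a homomorphism $F_m\to L$ which kills $T$ (as $L$ satisfies $f\equiv0$) and kills $J$ (as each commutator in the $b_i$ is ad-nilpotent of index at most $n$, so $[x,{}_nc]=0$ for all $x\in L$). Hence it factors through a surjection $U\to L$, and therefore $L$ is nilpotent of class at most $C$, as required.

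The only genuinely hard input is Zelmanov's theorem itself, which I am taking as given; everything else is the standard universal-algebra packaging. The point requiring care is the verification that the single relation scheme defining $J$ really captures ``ad-nilpotent of index at most $n$'' for every generator-commutator simultaneously — this relies on the linearity of $[\,\cdot\,,{}_nc]$ in its first argument, so that imposing the relation on a spanning set of $U$ forces it on all of $U$ — together with the check that $U$ genuinely lies in the variety and remains finitely generated after the second quotient, so that the hypotheses of Zelmanov's theorem are met.
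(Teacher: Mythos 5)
Your proposal is correct and follows essentially the same route as the paper: the paper does not prove Theorem~\ref{3.1} directly but states that it follows from Zelmanov's nilpotency theorem ``using some routine universal arguments'' (citing \cite{khushu}), and your construction --- the relatively free algebra $R=F_m/T$, the further quotient $U=R/J$ enforcing ad-nilpotency of the generator-commutators, an application of Zelmanov's theorem to $U$, and the surjection $U\to L$ --- is precisely that universal argument written out in full. The details check out (in particular, $U$ inherits the identity $f\equiv 0$, remains $m$-generated, and every element of $U$ lifts to $R$, so the relations defining $J$ do force $[x,{}_n\bar c]=0$ for all $x\in U$), so there is no gap.
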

An important criterion for a Lie algebra to be PI is the
following: 
\begin{theorem}\label{3.2} Let $L$ be a Lie algebra over a field ${\mathfrak k}$.
Assume that a finite group $A$ acts on $L$ by automorphisms in such a manner that $C_L(A)$, the subalgebra formed by fixed elements, is PI. Assume further that the characteristic of ${\mathfrak k}$ is either 0 or prime to the order of $A$. Then $L$ is PI.
\end{theorem}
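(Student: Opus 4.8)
The plan is to reduce the assertion to a statement about group-graded Lie algebras --- namely that a Lie algebra graded by a finite group whose identity component is PI is itself PI --- and then to pass from the abelian case to an arbitrary finite group by induction on the order.

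First I would arrange that $\mathfrak k$ is algebraically closed. Let $K$ be the algebraic closure of $\mathfrak k$ and put $L_K=L\otimes_{\mathfrak k}K$, with $A$ acting $K$-linearly. Since $|A|$ is invertible in $\mathfrak k$, the averaging operator $x\mapsto |A|^{-1}\sum_{a\in A}x^a$ is defined over $\mathfrak k$ and projects $L$ onto $C_L(A)$; consequently $C_{L_K}(A)=C_L(A)\otimes_{\mathfrak k}K$, so $C_{L_K}(A)$ is PI because $C_L(A)$ is. As $L$ embeds in $L_K$ and a multilinear identity of $L_K$ restricts to $L$, it is enough to prove $L_K$ is PI. Hence I may assume that $\mathfrak k$ is algebraically closed; in particular it contains a primitive root of unity of order equal to the exponent of $A$, and its characteristic is still $0$ or prime to $|A|$.

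Next I would treat the case of abelian $A$. Each $a\in A$ then acts semisimply, its order being prime to the characteristic, and the commuting operators $\{a:a\in A\}$ are simultaneously diagonalizable. Thus $L$ is the direct sum of the weight spaces $L_\chi$, indexed by the characters $\chi\in\widehat A$, where $L_\chi$ consists of those $x$ with $x^a=\chi(a)x$ for every $a\in A$. A direct computation gives $[L_\chi,L_\psi]\subseteq L_{\chi\psi}$, so this is a grading of $L$ by the finite group $\widehat A$ whose identity component is precisely $C_L(A)$. I would then invoke the theorem that a Lie algebra graded by a finite group with PI identity component is PI, with PI-degree bounded in terms of $|A|$ and the degree of the identity on the component; this yields the conclusion for abelian $A$.

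For a general finite group $A$ I would induct on $|A|$. If $A$ possesses a proper nontrivial normal subgroup $N$, then $A$ acts on $C_L(N)$ and this action factors through $A/N$, while $C_{C_L(N)}(A/N)=C_L(A)$ is PI. Since $|A/N|<|A|$, induction shows $C_L(N)$ is PI, and since $|N|<|A|$, a second application of the inductive hypothesis to the action of $N$ on $L$ shows $L$ is PI. This leaves only the case of simple $A$; for $A$ cyclic of prime order the abelian case applies. The remaining, and main, obstacle is the non-abelian simple case, where $L$ no longer carries an honest $\widehat A$-grading: the weight-space decomposition must be replaced by the isotypic decomposition of $L$ as a module over the semisimple algebra $\mathfrak k A$, whose components multiply according to the tensor structure of the representation category of $A$. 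The PI conclusion then rests on the Zelmanov-type combinatorial analysis of identities respecting this generalized grading --- equivalently, on the theorem on actions of finite-dimensional semisimple Hopf algebras --- which is the technical heart of the argument.
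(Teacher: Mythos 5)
The paper does not actually prove this statement: Theorem \ref{3.2} is quoted as a known result, with the soluble case attributed to Bahturin and Zaicev \cite{bz} and the general case to Linchenko \cite{li}. Your proposal is therefore best judged as a reconstruction of the argument in those references, and as such it is faithful as far as it goes. The scalar extension via the averaging operator $x\mapsto |A|^{-1}\sum_{a\in A}x^a$, the simultaneous diagonalization giving the weight-space decomposition $L=\oplus_\chi L_\chi$ with $[L_\chi,L_\psi]\subseteq L_{\chi\psi}$, the observation that this is a grading by $\widehat{A}$ with identity component $C_L(A)$, and the induction on $|A|$ through a proper normal subgroup are exactly the scheme by which the soluble case is handled in \cite{bz}: the derived series reduces soluble $A$ to the abelian case, which the grading argument converts into the graded-identities theorem of that paper.

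As a self-contained proof, however, the proposal has a genuine gap, and you have located it yourself: the non-abelian simple case. There your text gives no argument; it appeals to ``the theorem on actions of finite-dimensional semisimple Hopf algebras,'' which for $H={\mathfrak k}A$ \emph{is} the statement being proved --- that is Linchenko's theorem \cite{li}, not a tool available prior to it, so this step is circular if read as part of a proof. (Even the abelian case rests on the graded PI theorem of \cite{bz}, which you invoke but do not prove; that theorem, and its Hopf-theoretic extension, is where all the combinatorial work with identities actually lies --- the isotypic decomposition of $L$ over ${\mathfrak k}A$ by itself produces no identity.) In short: your reduction scheme correctly mirrors how the literature proves the theorem, and it occupies the same logical position as the paper (deferring the hard content to \cite{bz} and \cite{li}), but the final paragraph of your argument is a citation, not a proof.
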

This theorem was proved by Bahturin and Zaicev for soluble 
groups $A$ \cite{bz} and extended by Linchenko to the general
case \cite{li}. The next result was deduced in \cite{shu11}.
\begin{corollary}\label{3.3} Let $F$ be the free Lie algebra of countable rank over
${\mathfrak k}$. Denote by $F^*$ the set of non-zero elements of $F$. For any finite group $A$ there exists a mapping $$\theta:F^*\rightarrow F^*$$ such that if $L$ and $A$ are
as in Theorem~\ref{3.2}, and if $C_L(A)$ satisfies an identity $f\equiv 0$, then $L$ 
satisfies the identity $\theta(f)\equiv 0$.
\end{corollary}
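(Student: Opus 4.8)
The plan is to reduce the statement to Theorem~\ref{3.2} by means of a relatively free (universal) construction, so that a single identity produced for one universal algebra transfers automatically to every $L$ in the class. Fix the finite group $A$ and the field $\mathfrak{k}$, whose characteristic is $0$ or prime to $|A|$, and fix a nonzero $f\in F$. First I would build the free $A$-Lie algebra $\mathcal{F}_A$ over $\mathfrak{k}$ on the generating set $\{z_{i,a}:i=1,2,\dots,\ a\in A\}$, with $A$ acting by $z_{i,a}^{\,b}=z_{i,ab}$; this action extends uniquely to automorphisms of $\mathcal{F}_A$. I would then form the ideal $\mathcal{I}$ generated by all evaluations $f(c_1,\dots,c_n)$ with $c_1,\dots,c_n\in C_{\mathcal{F}_A}(A)$, and set $\mathcal{L}=\mathcal{F}_A/\mathcal{I}$. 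Since each generator $f(c_1,\dots,c_n)$ is itself $A$-fixed (as $f$ is a Lie polynomial and the $c_j$ are fixed), the ideal $\mathcal{I}$ is automatically $A$-invariant, and $A$ acts on $\mathcal{L}$ by automorphisms.

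The first key step is to verify that $C_{\mathcal{L}}(A)$ satisfies $f\equiv 0$. Writing $\pi\colon\mathcal{F}_A\to\mathcal{L}$ for the projection, I would show $\pi\bigl(C_{\mathcal{F}_A}(A)\bigr)=C_{\mathcal{L}}(A)$ by the standard averaging argument: since $|A|$ is invertible in $\mathfrak{k}$, any lift of a fixed element of $\mathcal{L}$ can be replaced by its average $\frac{1}{|A|}\sum_{a\in A}(\cdot)^{a}$, which is a fixed preimage mapping to the same element. Given $\bar c_1,\dots,\bar c_n\in C_{\mathcal{L}}(A)$, lifting to $c_j\in C_{\mathcal{F}_A}(A)$ yields $f(\bar c_1,\dots,\bar c_n)=\pi\bigl(f(c_1,\dots,c_n)\bigr)=0$, so $C_{\mathcal{L}}(A)$ is PI. Theorem~\ref{3.2} then applies and shows that $\mathcal{L}$ itself is PI. Fixing once and for all an enumeration of $F^{*}$, I would let $\theta(f)$ be the first nonzero element of $F$ that is an identity of $\mathcal{L}$; this makes $\theta\colon F^{*}\to F^{*}$ a genuine map depending only on $A$ and $\mathfrak{k}$.

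It remains to establish universality, namely that $\theta(f)\equiv 0$ holds in every $L$ as in Theorem~\ref{3.2} with $C_L(A)$ satisfying $f\equiv 0$. Given such an $L$ and arbitrary elements $\ell_1,\ell_2,\dots\in L$, the assignment $z_{i,a}\mapsto\ell_i^{\,a}$ defines an $A$-equivariant homomorphism $\Phi\colon\mathcal{F}_A\to L$ with $\Phi(z_{i,1})=\ell_i$. Equivariance forces $\Phi\bigl(C_{\mathcal{F}_A}(A)\bigr)\subseteq C_L(A)$, so each generator $f(c_1,\dots,c_n)$ of $\mathcal{I}$ is sent to $0$; hence $\Phi$ factors through an $A$-equivariant map $\bar\Phi\colon\mathcal{L}\to L$. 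Because $\mathcal{L}$ satisfies $\theta(f)\equiv 0$, this identity vanishes on the images $\bar z_{1,1},\dots,\bar z_{k,1}$ in $\mathcal{L}$; applying $\bar\Phi$ gives $\theta(f)(\ell_1,\dots,\ell_k)=0$. As the $\ell_i$ are arbitrary, $L$ satisfies $\theta(f)\equiv 0$, as required.

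I expect the main obstacle to be the first step, ensuring that passage to fixed points commutes with the quotient, i.e. that $C_{\mathcal{L}}(A)$ is exactly the image $\pi\bigl(C_{\mathcal{F}_A}(A)\bigr)$ and therefore inherits the identity $f$. This is precisely where the hypothesis that the characteristic of $\mathfrak{k}$ is $0$ or prime to $|A|$ is indispensable: without the invertibility of $|A|$, the averaging that produces fixed-point lifts breaks down, and one could no longer guarantee that $C_{\mathcal{L}}(A)$ is PI. The remaining ingredients — the free $A$-algebra, the canonical choice of $\theta(f)$ via a fixed enumeration, and the equivariant factorization — are the routine universal arguments alluded to after Theorem~\ref{3.2}.
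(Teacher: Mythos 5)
Your proof is correct, and it is exactly the ``routine universal argument'' that lies behind the paper's treatment of this statement: the paper does not prove Corollary~\ref{3.3} at all but quotes it from \cite{shu11}, where the same construction is used --- the free Lie algebra with $A$-action (equivalently, the relatively free algebra of the variety cut out by the averaged identity), the averaging trick to lift fixed points of the quotient to fixed points of the free algebra, an application of Theorem~\ref{3.2} to that single universal algebra, and an $A$-equivariant specialization onto an arbitrary $L$. One cosmetic caveat: when $\mathfrak{k}$ is uncountable, $F^*$ admits no enumeration, so you should select $\theta(f)$ via a well-ordering of $F^*$ or simply by a choice function on the (nonempty) sets of identities of the universal algebras; nothing else in your argument changes.
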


To be able to use Theorem \ref{3.1} we need a tool
allowing us to deduce that certain elements of $L$ are
ad-nilpotent. In this context the following lemma from \cite{khushu} is
quite helpful.
\begin{lemma}\label{39} Suppose that $L$ is a Lie algebra,
$K$ a subalgebra of $L$ gene\-ra\-ted by $r$ elements
$h_1,\dots ,h_r$ such that all commutators in the $h_i$
are ad-nilpotent in $L$ of index $t$. 
If $K$ is nilpotent of class $c$, then for some
$(c,r,t)$-bound\-ed number $u$ 
we have $[L,\underbrace{K,\dots,K}_{u}]=0$. 
\end{lemma}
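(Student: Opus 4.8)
The plan is to reformulate the statement as the nilpotency of an associative algebra of operators. Let $R$ be the associative (non-unital) subalgebra of $\mathrm{End}(L)$ generated by the operators $\mathrm{ad}(h_1),\dots,\mathrm{ad}(h_r)$, where $\mathrm{ad}(a)$ denotes the map $x\mapsto[x,a]$. Since $\mathrm{ad}$ is a Lie homomorphism, for every commutator $\beta$ in the $h_i$ the operator $\mathrm{ad}(\beta)$ is a Lie commutator of the $\mathrm{ad}(h_i)$ and hence lies in $R$; by hypothesis it satisfies $\mathrm{ad}(\beta)^{t}=0$. Reading an iterated bracket as a product of such operators acting on $L$, and recalling that $K$ is spanned by commutators in the $h_i$, one checks $[L,\underbrace{K,\dots,K}_{u}]\subseteq L\cdot R^{u}$. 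Thus it suffices to produce a $(c,r,t)$-bounded $u$ for which every product of at least $u$ of the generators $\mathrm{ad}(h_i)$ vanishes, since then $R^{u}=0$ and $[L,\underbrace{K,\dots,K}_{u}]=0$.

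To carry this out I would pass to the free nilpotent Lie algebra $N$ of class $c$ on $r$ free generators $x_1,\dots,x_r$, whose natural grading by weight is the essential device. The assignment $x_i\mapsto\mathrm{ad}(h_i)$ induces a surjective Lie homomorphism of $N$ onto the subalgebra $\mathrm{ad}(K)$ of $\mathrm{End}(L)$ (it factors through $N$ because $\mathrm{ad}(K)$ is nilpotent of class at most $c$ and $r$-generated), and this extends to an associative homomorphism $U(N)\to\mathrm{End}(L)$ carrying $U(N)$ onto the subalgebra generated by $\mathrm{ad}(K)$. Fix a homogeneous basis $b_1,\dots,b_s$ of $N$ consisting of basic commutators, with $b_j$ of weight $w_j\in\{1,\dots,c\}$; here $s=\dim N$ is $(c,r)$-bounded. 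By the Poincar\'e--Birkhoff--Witt theorem the ordered monomials $b_1^{e_1}\cdots b_s^{e_s}$ form a basis of $U(N)$, and since the defining relations of $U(N)$ are weight-homogeneous, $U(N)$ is graded with $b_1^{e_1}\cdots b_s^{e_s}$ homogeneous of weight $\sum_j e_jw_j$. Writing $B_j$ for the image of $b_j$, we have $B_j=\mathrm{ad}(\beta_j)$ with $\beta_j$ a commutator in the $h_i$, so $B_j^{t}=0$.

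Then a weight count finishes the argument. A product $\mathrm{ad}(h_{i_1})\cdots\mathrm{ad}(h_{i_m})$ of $m$ generators is the image of the element $x_{i_1}\cdots x_{i_m}$ of $U(N)$, which is homogeneous of weight $m$; expanding it in the PBW basis and applying the homomorphism to $R$ expresses it as a linear combination of ordered monomials $B_1^{e_1}\cdots B_s^{e_s}$ with $\sum_j e_jw_j=m$. As $w_j\le c$, we get $\sum_j e_j\ge m/c$, so by the pigeonhole principle some $e_j\ge m/(cs)$. Choosing $m=cst$ forces $e_j\ge t$ for some $j$; since the ordered monomial then contains the contiguous block $B_j^{e_j}$ and $B_j^{t}=0$, it vanishes. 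Hence every product of at least $cst$ generators is zero, so $R^{cst}=0$ and $u=cst$, which is $(c,r,t)$-bounded, gives $[L,\underbrace{K,\dots,K}_{u}]=0$.

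The main obstacle is making the weight bookkeeping rigorous: the algebra $K$ itself need not be graded, since the relations making it nilpotent of class $c$ may mix commutators of different weights, and so the weight of an element of $\mathrm{ad}(K)$ is a priori ill-defined. Working instead inside the genuinely graded objects $N$ and $U(N)$, and transporting the resulting homogeneous PBW expansion down to $R$ only at the last moment, is exactly what legitimizes the weight-preservation of the collection process and hence the pigeonhole step; verifying along the way that the basic commutators furnish a spanning set of $(c,r)$-bounded size is routine.
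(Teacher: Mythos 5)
A point of order first: the paper does not actually prove Lemma \ref{39}; it is imported verbatim from \cite{khushu}, so there is no in-paper argument to compare against, and your proof has to be measured against that source. Your proof is correct. The reduction is sound: $\mathrm{ad}(K)$ is contained in the associative algebra $R$ generated by the $\mathrm{ad}(h_i)$, so $[L,\underbrace{K,\dots,K}_{u}]\subseteq L\cdot R^{u}$ and it suffices to kill long products of the generators. The map $U(N)\to\mathrm{End}(L)$ exists by the universal property of $U(N)$, because $\mathrm{ad}(K)$ is an $r$-generated Lie subalgebra of $\mathrm{End}(L)$ of class at most $c$; the algebra $U(N)$ is indeed graded, since $\gamma_{c+1}$ of the free Lie algebra is a homogeneous ideal and the defining relations of $U(N)$ are weight-homogeneous; the PBW basis is homogeneous, so a weight-$m$ element expands into weight-$m$ ordered monomials only; and the pigeonhole step correctly uses the one feature that makes ordered monomials indispensable here, namely that all occurrences of $B_j$ sit in one contiguous block, so $e_j\ge t$ forces the monomial to vanish. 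This gives $u=cst$ with $s=\dim N$, which is $(c,r)$-bounded, as required.

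Your route is genuinely different from the one in \cite{khushu}. There the argument stays entirely inside $\mathrm{End}(L)$: $K$ is spanned by its $(c,r)$-boundedly many commutators in the $h_i$, each ad-nilpotent of index at most $t$, and the vanishing is obtained by an elementary collection argument organized as an induction on the nilpotency class of $K$ --- in the abelian case the relevant ad operators commute pairwise and pigeonhole finishes; in the inductive step one takes $Z=\gamma_c(K)$, whose spanning commutators are central (so their ad operators commute, giving $[L,\underbrace{Z,\dots,Z}_{v}]=0$ for bounded $v$), and runs down the $K$-invariant chain $L\supseteq[L,Z]\supseteq[L,Z,Z]\supseteq\cdots$, applying the induction hypothesis to $K/Z$ acting on each quotient. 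Your proof outsources precisely this collection bookkeeping to the PBW theorem in the enveloping algebra of the free nilpotent Lie algebra, where the weight grading makes it automatic. What you gain is a shorter, more conceptual argument with a clean one-shot bound $u=ct\dim N$; what the original approach buys is freedom from enveloping algebras altogether --- nothing beyond the Jacobi identity and pigeonhole --- which keeps it at the same elementary level as the rest of the Lie-theoretic toolkit in these papers, at the cost of a bound that accumulates one pigeonhole factor per step of the induction.
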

We now turn to groups. Throughout the rest of the
section $p$ will denote an arbitrary but fixed prime.
Let $G$ be any group. 
A series of subgroups $$G=G_1\geq G_2\geq\dots   \eqno{(*)}$$
is called an $N_p$-series if $[G_i,G_j]\leq G_{i+j}$ and
$G_i^p\leq G_{pi}$ for all $i,j$. With any $N_p$-series $(*)$ of $G$ one can
associate a Lie algebra $L^*(G)$ over $\mathbb F_p$, the field with $p$ elements.

Namely, view the quotients $L_i^*=G_i/G_{i+1}$ as linear spaces over $\mathbb F_p$, and let  $L^*(G)$ be the direct sum of these spaces. Commutation in $G$ induces a binary operation $[,]$ in $L^{*}(G)$. For homogeneous elements $xG_{i+1}\in L_i^*, yG_{j+1}\in L_j^*$ the operation is defined by $$[xG_{i+1},yG_{j+1}]=[x,y]G_{i+j+1}\in L_{i+j}^*$$ and extended to arbitrary elements of $L^*(G)$ by linearity. Thus, $L^*(G)$ with the operations $+$ and $[,]$ is a Lie algebra over $\mathbb F_p$.

For any $x\in G_i\setminus G_{i+1}$ let $x^*$ denote the element $xG_{i+1}$ of $L^*(G)$.
\begin{proposition}\label{3.6} (Lazard, \cite{la}).
For any $x\in G$ we have $(ad\, x^*)^p=ad\, (x^p)^*$. 
\end{proposition}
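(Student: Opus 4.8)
The plan is to realise $L^*(G)$ inside the associated graded algebra of the group algebra $\mathbb F_p[G]$, where the identity reduces to two entirely general facts about associative algebras in characteristic $p$.

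First I would reduce the statement to a congruence between group commutators. Both $(ad\,x^*)^p$ and $ad\,(x^p)^*$ are $\mathbb F_p$-linear operators on $L^*(G)$, so it suffices to evaluate them on a homogeneous element $g^*$ with $g\in G_j\setminus G_{j+1}$; fix also $x\in G_i\setminus G_{i+1}$. Since the bracket of $L^*(G)$ is induced by commutation in $G$, iterating $ad\,x^*$ gives $(ad\,x^*)^p(g^*)=[g,{}_px]^*$, while $ad\,(x^p)^*(g^*)=[g,x^p]^*$; both lie in $L_{j+pi}^*$, because $x^p\in G_{pi}$ by the $N_p$-property $G_i^p\le G_{pi}$. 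Hence the proposition is equivalent to the congruence $[g,{}_px]\equiv[g,x^p]\pmod{G_{j+pi+1}}$.

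To establish this I would pass to $A=\mathbb F_p[G]$ and filter it by the ideals $A_n$ spanned by all products $(h_1-1)\cdots(h_r-1)$ with $h_s\in G_{d_s}$ and $d_1+\dots+d_r\ge n$. This filtration is compatible with the $N_p$-series, so $\mathrm{gr}(A)=\bigoplus_nA_n/A_{n+1}$ is an associative graded $\mathbb F_p$-algebra, and, following Jennings and Lazard, the map sending $g^*\in L_i^*$ to $(g-1)+A_{i+1}$ embeds $L^*(G)$ into $\mathrm{gr}(A)$ as a Lie subalgebra whose bracket is the associative commutator. The point of working in $A$ is the elementary characteristic-$p$ identity $(h-1)^p=h^p-1$, valid for every $h\in G$; since $h^p\in G_{pd}$ whenever $h\in G_d$, this shows that the image of $(x^p)^*$ in $\mathrm{gr}(A)$ is exactly the $p$-th associative power of the image of $x^*$. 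Finally I would invoke the ring-theoretic identity $(ad\,u)^p=ad\,(u^p)$, which holds in any associative $\mathbb F_p$-algebra: writing $ad\,u=L_u-R_u$ as the difference of the commuting operators of left and right multiplication by $u$, additivity of the Frobenius gives $(L_u-R_u)^p=L_u^p-R_u^p=L_{u^p}-R_{u^p}$. Applying this with $u$ the image of $x^*$, and using that $u^p$ is the image of $(x^p)^*$ by the previous step, restriction of these derivations back to $L^*(G)$ yields $(ad\,x^*)^p=ad\,(x^p)^*$ (the two sign conventions for $ad$ agreeing once a $p$-th power is taken in characteristic $p$), which is the assertion.

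The main obstacle is the compatibility asserted in the third paragraph: that the $N_p$-series filtration of $\mathbb F_p[G]$ agrees with the augmentation filtration closely enough for $\mathrm{gr}(A)$ to be graded-associative and for $g^*\mapsto(g-1)+A_{i+1}$ to be a well-defined injective Lie homomorphism carrying the bracket to the commutator. This is precisely the Jennings--Lazard analysis of $\mathrm{gr}(\mathbb F_p[G])$; once it is in place, the two displayed identities make the proposition routine. An alternative, more computational route avoids $A$ altogether and proves the congruence $[g,{}_px]\equiv[g,x^p]\pmod{G_{j+pi+1}}$ directly by the Hall collection process, the delicate point there being that for the prime exponent $p$ every collected commutator other than $[g,{}_px]$ occurs with an exponent divisible by $p$ and is then absorbed into $G_{j+pi+1}$ by the $N_p$-property.
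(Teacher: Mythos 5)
The paper offers no proof of this proposition at all: it is imported verbatim from Lazard \cite{la}, and is later applied (via Lemma \ref{4.7}) only to the Jennings--Lazard--Zassenhaus series. So the only meaningful comparison is with the classical argument, and your sketch \emph{is} essentially that classical argument; it is correct. The reduction to the congruence $[g,{}_px]\equiv[g,x^p]\pmod{G_{j+pi+1}}$ is right, granted the convention (which you implicitly adopt, and which is the one the paper needs) that $(x^p)^*$ denotes the image of $x^p$ in $G_{pi}/G_{pi+1}$, possibly zero, rather than the homogeneous component at the exact depth of $x^p$. The group-algebra verifications are all sound: the weighted spans $A_n$ are two-sided ideals with $A_mA_n\subseteq A_{m+n}$; the map $g^*\mapsto(g-1)+A_{i+1}$ respects brackets because $(g-1)(h-1)-(h-1)(g-1)=hg([g,h]-1)\equiv[g,h]-1\pmod{A_{i+j+1}}$; and both Frobenius identities, $(h-1)^p=h^p-1$ and $(L_u-R_u)^p=L_{u^p}-R_{u^p}$, hold (also for $p=2$, where $-1=1$).

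The entire weight of the proof sits in the step you defer to the ``Jennings--Lazard analysis,'' and you are right to single it out, but be precise about what is needed: not merely that $g^*\mapsto(g-1)+A_{i+1}$ is well defined and bracket-preserving (that much is the easy computation above), but the component-wise \emph{injectivity}, i.e.\ that $g\in G_n$ and $g-1\in A_{n+1}$ force $g\in G_{n+1}$. Without it, your restriction argument only shows that the two sides of the congruence have equal images in $A_{j+pi}/A_{j+pi+1}$. This injectivity is Jennings' basis theorem: lifting bases of the quotients $G_i/G_{i+1}$ to elements $x_{i,k}$, the ordered monomials $\prod(x_{i,k}-1)^{a_{i,k}}$ with $0\le a_{i,k}<p$ form a basis of $\mathbb F_p[G]$, and $A_n$ is spanned by those of weight at least $n$. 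Importantly for the generality of Proposition \ref{3.6}, the straightening proof of that theorem uses exactly the two $N_p$-axioms --- $[G_i,G_j]\le G_{i+j}$ to commute factors at the cost of higher-weight terms, and $G_i^p\le G_{pi}$ together with $(x-1)^p=x^p-1$ to reduce exponents below $p$ --- so it holds for an arbitrary $N_p$-series of a finite $p$-group, not only for the dimension-subgroup series covered by Quillen's theorem. With that citation made precise, your proof is complete; the Hall-collection alternative you mention also works but is considerably more delicate, and the group-ring route is the one implicit in \cite{la}.
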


The following proposition is immediate from the proof of Theorem 1 in the paper of Wilson and Zelmanov \cite{wize}.
\begin{proposition}\label{3.7} Let $G$ be a group
satisfying a group identity $w\equiv 1$. Then there
exists a non-zero multilinear Lie polynomial $f$ over
$\mathbb F_p$ depending only on $p$ and $w$ such that
for any $N_p$-series $(*)$ of $G$ the
algebra $L^*(G)$ satisfies the identity $f\equiv 0$.
\end{proposition}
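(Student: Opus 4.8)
The plan is to follow the argument underlying the proof of Theorem~1 in \cite{wize}, keeping track of how the Lie polynomial $f$ is manufactured from $w$, so as to verify that it depends only on $w$ and $p$ and may be taken multilinear. Let $w=w(x_1,\dots,x_k)$ and let $F$ be the free group on $x_1,\dots,x_k$; we may assume $w\neq 1$ in $F$. First I would pass to a graded picture over $\mathbb F_p$ via the Magnus embedding $\mu\colon F\hookrightarrow \mathbb F_p\langle\langle X_1,\dots,X_k\rangle\rangle^{\times}$, $x_s\mapsto 1+X_s$, under which the Zassenhaus (dimension) filtration $D_n(F)$ corresponds to congruence to $1$ modulo the terms of degree $\ge n$. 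Since $w\neq1$ and $\bigcap_n D_n(F)=1$, the element $\mu(w)-1$ has a nonzero homogeneous component $u$ of some minimal degree $m$. By the Jennings--Quillen description of $\bigoplus_n D_n(F)/D_{n+1}(F)$ as the free restricted Lie algebra over $\mathbb F_p$ on the $X_s$, this leading term $u$ is a nonzero element of that algebra; that is, $u$ is a nonzero \emph{restricted} Lie polynomial in $X_1,\dots,X_k$ depending only on $w$ and $p$.

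Next I would transfer the vanishing of $w$ into a restricted identity of $L^*(G)$. Fix an arbitrary $N_p$-series $(*)$. Then $L^*(G)$ is a restricted Lie algebra over $\mathbb F_p$, the $p$-operation being induced by the $p$-power map (this is where $G_i^p\le G_{pi}$ enters), and by Proposition~\ref{3.6} it is compatible with the adjoint representation in the sense that $(\operatorname{ad}x^*)^p=\operatorname{ad}(x^p)^*$. For homogeneous elements $a_s^*\in L^*(G)$ the image in $L^*(G)$ of the group value $w(a_1,\dots,a_k)$ is computed in the associated graded algebra by replacing products with sums, commutators with brackets and $p$-th powers with the $p$-operation; as $w(a_1,\dots,a_k)=1$, the lowest-degree contribution, which is $u$ evaluated at the $a_s^*$, must vanish. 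Carrying this out with a weighting of the variables that reflects the filtration degrees $\deg a_s^*$ isolates the multihomogeneous components of $u$ one at a time and shows that $u\equiv0$ is a restricted identity of $L^*(G)$.

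It remains to convert this into a nonzero \emph{multilinear ordinary} Lie identity. I would introduce a fresh variable $z$ and set $g:=[u,z]$. Applying the restricted-Lie rule $\operatorname{ad}(\ell^{[p]})=(\operatorname{ad}\ell)^p$ (matching Proposition~\ref{3.6}) repeatedly, every $p$-operation in $u$ is absorbed into iterated ordinary brackets upon commuting with $z$, so $g$ is an honest Lie polynomial in $z,X_1,\dots,X_k$. It is nonzero: inside the free associative algebra $g=uz-zu$, which cannot vanish since $u$ has positive degree and $z$ is a new free variable, so $u$ is not central. As $u\equiv0$ in $L^*(G)$ forces $g\equiv0$ there, $L^*(G)$ satisfies the nonzero ordinary identity $g\equiv0$, with $g$ depending only on $w$ and $p$. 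Finally I would multilinearize $g$ by polarization; this is a purely integral operation, and over $\mathbb F_p$ it still yields a nonzero multilinear consequence, because the free ordinary Lie algebra embeds into the noncommutative free associative algebra, in which no homogeneous element of degree $\ge2$ in a variable is additive in that variable. The resulting multilinear $f$ is the required polynomial.

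The main obstacle I anticipate is the middle step. Making precise, for an arbitrary $N_p$-series and homogeneous arguments of differing filtration degrees, that the leading term of the evaluated word in the associated graded algebra really is $u$ (once the multidegrees are sorted out), and hence that $u\equiv0$ holds in every $L^*(G)$ rather than merely in the Lie algebra of the Zassenhaus series of the free group, is the delicate part of the bookkeeping. The passage from a restricted to an ordinary identity through Proposition~\ref{3.6}, and the preservation of non-triviality under multilinearization over the prime field, are the remaining points needing care, but they are comparatively routine.
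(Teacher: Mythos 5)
The paper offers no proof of Proposition~\ref{3.7}: it is quoted as immediate from the proof of Theorem~1 of \cite{wize}, so your proposal must stand on its own as a reconstruction of the Wilson--Zelmanov argument. Your first and last steps are sound: the leading form $u$ of $\mu(w)-1$ is indeed a nonzero restricted Lie element depending only on $w$ and $p$ (Jennings), and bracketing against a fresh variable, using Jacobson's rule $\mathrm{ad}(\ell^{[p]})=(\mathrm{ad}\,\ell)^p$, does eliminate the $p$-operations. The genuine gap is your middle step, and it is not mere bookkeeping: the claim that for homogeneous arguments the lowest-degree contribution of $w(a_1,\dots,a_k)$ is $u$ evaluated at the $a_s^*$, and that a weighting ``isolates the multihomogeneous components of $u$,'' is false when the filtration degrees $d_s$ are unequal. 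Write $\mu(w)-1=\sum_\alpha P_\alpha$ in multihomogeneous components, so that $u=\sum_{|\alpha|=m}P_\alpha$. The lowest-degree contribution is the \emph{weighted} leading form, namely the sum of those $P_\alpha$ minimizing $\sum_s\alpha_s d_s$, and this can consist entirely of components of total degree larger than $m$, which are not part of $u$ at all and need not be Lie elements. For instance, if components of multidegrees $(2,1)$ and $(1,3)$ both occur, then for degrees $(d_1,d_2)=(3,1)$ the second has weighted degree $6$ and displaces the first, which has weighted degree $7$. So what you actually obtain is a different vanishing statement for each degree vector, never the single statement ``$u\equiv 0$ in $L^*(G)$.''

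There is also a structural problem with your order of operations. Even in the equal-degree case, where the leading form really is $u$, you only get vanishing of $u$ on tuples of homogeneous elements of one common degree; since $u$ is not multilinear, this does not extend to arbitrary elements of $L^*(G)$ (over $\mathbb F_p$ one cannot recover multihomogeneous components of an identity by scalar substitutions), so ``$u\equiv 0$ is an identity of $L^*(G)$'' is never established and the final multilinearization has nothing to act on. The missing idea, which is the real content of the Wilson--Zelmanov proof, is to multilinearize \emph{first, at the group level}. Observe that any multidegree $\alpha^*$ with $|\alpha^*|=m$ and $P_{\alpha^*}\neq 0$ is automatically minimal, in the componentwise order, among \emph{all} multidegrees occurring in $\mu(w)-1$ (anything strictly below it would have total degree less than $m$). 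Let $f$ be the full polarization of one such $P_{\alpha^*}$: it is nonzero, multilinear, and an ordinary Lie polynomial, because multilinear elements of the free restricted Lie algebra lie in the free Lie algebra. Now, given homogeneous $b_{i,j}\in G_{d_{i,j}}$ of arbitrary mixed degrees, substitute $x_i\mapsto\prod_{j\in S_i}b_{i,j}$ for every choice of subsets $S_i$ and take the inclusion--exclusion alternating combination of the resulting relations $w(\cdots)-1=0$ in the filtered group algebra: terms omitting some $b_{i,j}$ cancel, terms repeating some $b_{i,j}$ have degree strictly greater than $D=\sum_{i,j}d_{i,j}$, and componentwise minimality of $\alpha^*$ guarantees that the degree-$D$ part of what survives is exactly $f$ evaluated at the images $b_{i,j}^*$. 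Hence $f$ vanishes on all homogeneous tuples of arbitrary degrees, and multilinearity then extends this to all of $L^*(G)$. Without componentwise minimality and this group-level linearization your argument cannot reach the conclusion.
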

In fact Wilson and Zelmanov describe in \cite{wize} an effective algorithm allowing to write $f$ explicitly for any $p$ and $w$ but we do not require this. 

Write $\gamma_i$ for $\gamma_i(G)$, the $i$th term of the lower central series of $G$. Set $D_i=\prod\limits_{jp^k\geq i}\gamma_j^{p^k}$. The subgroups $D_i$ form an $N_p$-series $G=D_1\geq D_2\geq\dots$ in the group $G$. This is known as the Jennings-Lazard-Zassenhaus series.
Let $DL(G)=\oplus L_i$ be the Lie algebra over $\mathbb F_p$ corresponding to the Jennings-Lazard-Zassenhaus series of $G$. Here $L_i=D_i/D_{i+1}$. Let $L_p(G)=\langle L_1\rangle$ be the subalgebra of $DL(G)$ generated by $L_1$. The following result was obtained in Riley  \cite{riley}.
\begin{lemma}\label{4.9} Suppose that $G$ is a finite $p$-group such that the Lie algebra $L_p(G)$ is nilpotent of class $c$. Then $D_{c+1}(G)$ is powerful.
\end{lemma}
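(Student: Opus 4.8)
The plan is to push everything into the graded Lie algebra $DL(G)$, extract the essential structural fact there, and then translate it back into the group. Throughout write $L_i=D_i/D_{i+1}$, and for $x\in D_i\setminus D_{i+1}$ let $x^{*}=xD_{i+1}$.

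First I would reformulate the hypothesis. The degree-$i$ homogeneous component of $L_p(G)=\langle L_1\rangle$ is exactly the image $\gamma_iD_{i+1}/D_{i+1}$ of $\gamma_i$ in $L_i$, so ``$L_p(G)$ nilpotent of class $c$'' is equivalent to $\gamma_i\le D_{i+1}$ for all $i>c$. By the structure of the Jennings--Lazard--Zassenhaus series (Jennings' theorem) the whole of $DL(G)$ is spanned by the restricted powers of a homogeneous basis of $L_p(G)$; since $L_p(G)$ now lives in degrees $\le c$, every component $L_i$ with $i>c$ is spanned by $p$-th powers of elements of $L_{i/p}$ (and $L_i=0$ when $p\nmid i$). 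By Lazard's Proposition~\ref{3.6}, $(\mathrm{ad}\,x^{*})^p=\mathrm{ad}\,(x^p)^{*}$, this is the group-level statement $D_i=(D_{i/p})^pD_{i+1}$ for $i>c$. A short downward induction along the (finite) series, using $D_i=D_{i+1}$ when $p\nmid i$ and $(D_{i/p})^p\le(D_{c+1})^p$ when $i\ge p(c+1)$, then yields $D_m\le(D_{c+1})^p$ for all $m\ge p(c+1)$; combined with the $N_p$-series inclusion $(D_{c+1})^p\le D_{p(c+1)}$ this gives the clean identity $(D_{c+1})^p=D_{p(c+1)}$.

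The heart of the argument is to show that the ideal $M=\bigoplus_{i>c}L_i$ of $DL(G)$ is abelian. Take homogeneous $a\in L_i$, $b\in L_j$ with $i,j>c$ and write each as a sum of $p$-th powers $u^{[p^{k}]}$, $w^{[p^{l}]}$ with $k,l\ge 1$ and $u,w\in L_p(G)$. Expanding $[a,b]$ and using Proposition~\ref{3.6} repeatedly, each cross term $[u^{[p^{k}]},w^{[p^{l}]}]$ rewrites as an ordinary iterated Lie commutator in the elements $u,w$ of $L_p(G)$; it is homogeneous of degree $i+j>c$ and therefore vanishes by the nilpotency of $L_p(G)$. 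Hence $[L_i,L_j]=0$ for all $i,j>c$, i.e.\ $M$ is abelian, which at the group level reads $[D_{c+1},D_{c+1}]\le D_{2c+3}$.

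It remains to pass from $D_{2c+3}$ down to $(D_{c+1})^p=D_{p(c+1)}$: together with the identity above this is precisely the assertion that $D_{c+1}$ is powerful (for $p=2$ one argues identically, with target $D_{4(c+1)}=(D_{c+1})^4$). This last step is where I expect the genuine difficulty. For the intermediate degrees $n$ with $2c+3\le n<p(c+1)$ the component $L_n$ is either zero (when $p\nmid n$) or is spanned by $p$-th powers coming from degrees $\le c$, that is, from outside $D_{c+1}$, so the abelianness of $M$ no longer lets one simply read off the vanishing of the image of a commutator there. To close the gap I would combine the Frobenius-semilinear $p$-power map that $M$ carries as an \emph{abelian} restricted Lie algebra (the Jacobson correction terms being brackets, they vanish in $M$) with commutator collection via the Hall--Petrescu formula, in order to force the image of $[x,y]$, for $x,y\in D_{c+1}$, to be trivial in each such $L_n$. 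This is the technical core, and it is exactly the part carried out in Riley's argument.
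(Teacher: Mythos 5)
Your setup is sound as far as it goes, but it does not prove the lemma: it reduces the statement to its hardest step and then appeals to ``Riley's argument'' for that step, which is circular, since the statement being proved \emph{is} the lemma from \cite{riley}. (For context: the paper itself contains no proof to compare against -- Lemma~\ref{4.9} is simply quoted from \cite{riley}.) What you establish correctly is the following. Nilpotency of $L_p(G)$ of class $c$ amounts to $\gamma_i\le D_{i+1}$ for all $i>c$; combined with the Jennings identity $D_i=\gamma_i\,(D_{\lceil i/p\rceil})^p$ and downward induction along the finite series, this gives $D_i=D_{i+1}$ for $i>c$ with $p\nmid i$, and $(D_{c+1})^p=D_{p(c+1)}$. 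Also, the ideal $M=\bigoplus_{i>c}L_i$ is indeed abelian: each $L_i$ with $i>c$ is spanned by elements $u^{[p^k]}$ with $u$ homogeneous in $L_p(G)$ of degree at most $c$, and $[u^{[p^k]},w^{[p^l]}]=\pm\bigl[[u,{}_{p^l}w],{}_{p^k-1}u\bigr]$ is then a homogeneous element of $L_p(G)$ of degree $>c$, hence zero.

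The genuine gap is the passage from there to powerfulness. Abelianness of $M$ yields only $[D_{c+1},D_{c+1}]\le D_{2c+3}$, whereas powerfulness requires $[D_{c+1},D_{c+1}]\le (D_{c+1})^p=D_{p(c+1)}$ for odd $p$ (and $\le (D_{c+1})^4$ for $p=2$), and closing the interval from $2c+3$ to $p(c+1)$ is the entire content of the lemma. Graded arguments cannot do it: once $[x,y]$ is known to lie below degree $\deg x+\deg y$, it is no longer the bracket of the leading terms of $x$ and $y$, so the structure of $M$ says nothing about its image in $L_n$ for $2c+3\le n<p(c+1)$; and, as you yourself note, those $L_n$ are spanned by $p$-th powers coming from degrees $\le c$, outside $D_{c+1}$. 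Nor does collection finish the job cheaply: writing $x\in D_{c+1}$ as a product of powers $u^p$ with $u\in D_a$, $a\le c$, $pa\ge c+1$, the expansion $[u^p,y]=\prod_{s=0}^{p-1}[u,y]^{u^s}$ produces terms whose exponents $\binom{p}{2},\dots,\binom{p}{p-1}$ are divisible by $p$ and which therefore can be pushed into $D_{p(c+1)}$, but the weight-$p$ commutator $\bigl[[u,y],{}_{p-1}u\bigr]$ survives with exponent $1$ and a priori lies only in $D_{c+1+pa}$, a subgroup in general strictly larger than $D_{p(c+1)}$. Taming such terms requires a further induction interleaving the $p$-power map with collection -- precisely the technical work carried out in \cite{riley}, which your proposal names but does not perform. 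As it stands, the proposal is an honest and correct reduction, together with a citation of the result it was supposed to prove.
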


Recall that powerful $p$-groups were introduced by 
Lubotzky and Mann in \cite{lbmn}: a finite $p$-group
$G$ is powerful if and only if $G^p\geq [G,G]$ for
$p\ne 2$ (or $G^4\geq [G,G]$ for $p=2$). These groups
have many nice properties. In particular, if such a group
is generated by elements of order dividing $m$, then the exponent
of $G$ divides $m$, too. 

Given a subgroup $H$ of the group $G$, we denote by
$L(G,H)$ the linear span in $DL(G)$ of all homogeneous
elements of the form $hD_{j+1}$, where $h\in D_j\cap H$.
Clearly, $L(G,H)$ is always a subalgebra of $DL(G)$. Moreover,
it is isomorphic with the Lie algebra associated with $H$
using the $N_p$-series of $H$ formed by $H_j=D_j\cap H$.
We also set $L_p(G,H)=L_p(G)\cap L(G,H)$.
Let $\phi$ be an automorphism of the group $G$.
Then $\phi$ acts naturally on every quotient of the
Jennings-Lazard-Zassenhaus series of $G$.
This action induces an automorphism of the Lie
algebra $DL(G)$. So when convenient we will consider
$\phi$ as an automorphism of $DL(G)$ (or of $L_p(G)$).
Lemma \ref{000} implies that if $G$ is finite and
$(|G|,|\phi|)=1$ then $L_p(G,C_G(\phi))=C_{L_p(G)}(\phi)$.
We will also require the following lemma.

\begin{lemma}\label{4.7} Let $G$ be a finite $p$-group and $H$ a
subgroup of $G$. Let $K=L(G,H)$. Then there exists
a number $u$ depending only on the order of $H$ such that
$[DL(G),\underbrace{K,\dots,K}_{u}]=0$.
\end{lemma}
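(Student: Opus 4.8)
The plan is to apply Lemma \ref{39} with $L=DL(G)$ and the subalgebra $K=L(G,H)$. To do this I must exhibit a bounded generating set for $K$ whose commutators are ad-nilpotent of bounded index, and verify that $K$ is nilpotent of bounded class, where all bounds depend only on $|H|$.

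First I would record the structural facts about $K$. Since $G$ is a finite $p$-group the Jennings--Lazard--Zassenhaus series terminates, so $\bigcap_j D_j=1$ and hence $\bigcap_j(D_j\cap H)=1$. As $L(G,H)$ is isomorphic to the Lie algebra associated with $H$ via the $N_p$-series $H_j=D_j\cap H$, its dimension over $\mathbb F_p$ equals $\sum_j\dim_{\mathbb F_p}(H_j/H_{j+1})=\log_p|H|=:m$. In particular $K$ is finite-dimensional, and being graded and concentrated in positive degrees it is nilpotent; the strict descent of its lower central series forces the class $c$ to satisfy $c\le m$. Moreover, by definition $K$ is spanned by homogeneous elements of the form $h^*=hD_{j+1}$ with $h\in H$, so from these I may extract a basis $h_1^*,\dots,h_r^*$ with $r=m$, and I take this as my generating set.

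The key step is to bound the ad-nilpotency index of the commutators in the $h_i^*$. I would use that any iterated Lie commutator in the $h_i^*$ is, by the definition of the bracket in $DL(G)$, the image in the appropriate homogeneous component of the corresponding iterated group commutator of the $h_i$; and since $H$ is a subgroup, that group commutator lies in $H$. Thus every commutator in the generators is either zero or of the form $g^*$ with $g\in H$. For such an element, iterating Lazard's formula (Proposition \ref{3.6}) gives $(ad\, g^*)^{p^k}=ad\,(g^{p^k})^*$, and choosing $p^k=\exp(H)$ we have $g^{p^k}=1$, whence $(ad\, g^*)^{\exp(H)}=0$. Hence every commutator in the $h_i^*$ is ad-nilpotent in $DL(G)$ of index $t\le\exp(H)\le|H|$.

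With the parameters $r=m$, $c\le m$ and $t\le\exp(H)$ all bounded in terms of $|H|$, Lemma \ref{39} then yields a $|H|$-bounded number $u$ with $[DL(G),\underbrace{K,\dots,K}_{u}]=0$, which is the assertion. The one point genuinely requiring care is precisely this ad-nilpotency bound: the naive index coming from the nilpotency of $DL(G)$ depends on $G$ rather than on $H$, and it is only by routing through the group exponent of $H$ via Lazard's formula that one secures an index depending on $H$ alone. The nilpotency and finite generation of $K$ are comparatively soft, following from the finite dimension $\log_p|H|$ and the positive grading.
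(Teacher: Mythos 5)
Your proof is correct and takes essentially the same route as the paper's: both reduce to Lemma \ref{39} by observing that commutators in the homogeneous generators of $K$ correspond to elements of $H$ (so Lazard's formula, Proposition \ref{3.6}, gives ad-nilpotency of index at most $|H|$) and that $K$ has $|H|$-bounded nilpotency class. The only cosmetic difference is that you bound the class via $\dim_{\mathbb F_p}K=\log_p|H|$, while the paper bounds it by the nilpotency class of $H$; both yield the required bound.
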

\begin{proof} If $x\in H$, the order of $x$ is at most $|H|$.
Therefore Lazard's Lemma \ref{3.6} shows that the corresponding element
$x^*$ is ad-nilpotent in $DL(G)$ of index at most $|H|$. Furthermore,
it is clear that the nilpotency class of $K$ is at most that of $H$
so the result follows from Lemma \ref{39}.
\end{proof}

\section{Proof of Theorem \ref{1}} 

Recall that Theorem \ref{1} states that if $e$ and $n$ are positive integers and $G$ a finite group admitting a coprime automorphism $\phi$ of order $n$ with the property that any element
of $G_\phi\cup G_{-\phi}$ is contained in a $\phi$-invariant subgroup of exponent dividing $e$, then the exponent of $G$ is $(e,n)$-bounded.

\begin{proof}[Proof of Theorem \ref{1}.] Since $\phi$ leaves invariant some Sylow
$p$-subgroup of $G$ for any prime divisor $p$ of the order of $G$ (Lemma \ref{000}.3), it is sufficient to prove the theorem under the additional assumption that $G$ is a $p$-group. So from now on $G$ is a $p$-group and $e$ is a $p$-power. Since any element
of $G$ is contained in a $\phi$-invariant $n$-generated subgroup,
we can also assume that $G$ is generated by at most $n$ elements.
Let $L=L_p(G)$ and $L_i=(D_i(G)/D_{i+1}(G))\cap L$. We would like to
decompose the linear spaces $L_i$ as direct sums of eigenspaces for $\phi$
but in general this is not possible as the field $\mathbb F_p$
may not contain a primitive $n$th root of unity. To overcome
this difficulty we extend the ground field $\mathbb F_p$.
Let $\omega$ be a primitive $n$th root of unity and set
 ${\overline L}=L\otimes \mathbb F_p [\omega]$.
We will view ${\overline L}$ as a Lie algebra over 
$\mathbb F_p[\omega]$ and $L$ as a subset of ${\overline L}$.
For any $\mathbb F_p$-subspace $S$ of $L$ we write
${\overline S}$ for $S\otimes \mathbb F_p [\omega]$.
The $\mathbb F_p [\omega]$-space ${\overline L}_1$ has a basis
consisting of eigenvectors for $\phi$. It is clear that since $G$ is
$n$-generated, the dimension of ${\overline L}_1$ is at most $n$.
Now fix an arbitrary $i$ and let $a\in{\overline L}_i$ be an
eigenvector for $\phi$. Then $a\in{\overline K}$, where $K$ is some
$\phi$-invariant subspace of $L_i$. We choose $K$ to be the minimal $\phi$-invariant subspace of $L_i$ with the property that $a\in{\overline K}$. Then either $K=K_\phi$ or $K=K_{-\phi}$. It follows that as an $\mathbb F_p\langle\phi\rangle$-module $K$ is generated by a single
element that corresponds to a group element $x$, where $x$ belongs to
either $G_{\phi}$ or $G_{-\phi}$. Let $X$ be a minimal $\phi$-invariant subgroup containing $x$. By the hypothesis the exponent of $X$ is a divisor of $e$. Therefore, by the solution
of the Restricted Burnside Problem \cite{zelm}, the order of $X$ is
$(e,n)$-bounded. Now Lemma \ref{4.7} tells us that there exists
an $(e,n)$-bounded number $u$ such that $[L,\underbrace{K,\dots,K}_{u}]=0$. Of course, this implies that $[{\overline L},\underbrace{{\overline K},\dots,{\overline K}}_{u}]=0$.
In particular, $[{\overline L},{}_{u}a]=0$.

Thus, we have shown that an arbitrary eigenvector contained in
${\overline L}_i$ is ad-nilpotent with $(e,n)$-bounded index at most $u$. It is
clear that if $a$ and $b$ are eigenvectors contained in ${\overline L}_i$
and ${\overline L}_j$ respectively, then  $[a,b]$ is an eigenvector
for $\phi$ contained in ${\overline L}_{i+j}$. So we see that ${\overline L}$
is generated by at most $n$ eigenvectors for $\phi$ and any commutator
in the generators is ad-nilpotent of index at most $u$. 
Using Proposition \ref{3.7} and the fact that $C_L(\phi)=L_p(G,C_G(\phi))$ we conclude that $C_L(\phi)$ satisfies certain $(e,n)$-bounded multilinear polynomial identity. This also holds in $C_{\overline L}(\phi)={\overline {C_L(\phi)}}$. Therefore
Corollary \ref{3.3} implies that $\overline L$ satisfies an $(e,n)$-bounded polynomial identity. It follows from Theorem \ref{3.1} that ${\overline L}$ is nilpotent of $(e,n)$-bounded class, say $c$. The same is true for $L$. Now Lemma \ref{4.9} tells us that 
$H=D_{c+1}(G)$ is powerful. By Lemma \ref{002}, $H=H_\phi H_{-\phi}$.
In particular, $H$ is generated by elements of order dividing $e$. Since
$H$ is powerful, it follows that the exponent of $H$ divides $e$ \cite{lbmn}.
Finally, it is clear that the exponent of $G/H$ divides $p^c$. The
proof is complete.
\end{proof}

\section{Proof of Theorem \ref{2}}

The proof of Theorem \ref{2} will require the following result from \cite{wa} whose proof uses the classification of finite simple groups.

\begin{theorem}\label{wa} If a finite group $G$ admits a coprime automorphism $\phi$ such that $G_\phi$ is nilpotent, then $G$ is soluble. 
\end{theorem}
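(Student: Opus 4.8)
I read the final statement as Theorem~\ref{wa}, and I would prove it by a minimal-counterexample argument that reduces to nonabelian simple groups, at which stage the classification enters. First I would record that the hypothesis passes to $\phi$-invariant sections: if $N$ is a $\phi$-invariant normal subgroup of $G$, then by Lemma~\ref{000}(2) we have $(G/N)_\phi=G_\phi N/N$, a homomorphic image of the nilpotent group $G_\phi$, hence nilpotent, and $\phi$ still acts coprimely on $G/N$. So let $G$ be a counterexample of least order. If the soluble radical $R=R(G)$ were nontrivial, then $R$ is characteristic, hence $\phi$-invariant, and $G/R$ would satisfy the hypotheses with smaller order, hence be soluble by minimality; but then $G$ itself would be soluble, a contradiction. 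Thus $R(G)=1$. In particular $G$ has no nontrivial abelian normal subgroup, so its socle is a direct product $N=S_1\times\cdots\times S_m$ of nonabelian simple groups, and $N$ is $\phi$-invariant since the socle is characteristic.

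Now the fixed points satisfy $C_N(\phi)=N\cap G_\phi\le G_\phi$, so $C_N(\phi)$ is nilpotent. On the other hand $\phi$ permutes the simple factors $S_i$; grouping them into $\langle\phi\rangle$-orbits and writing $N=\prod_O N_O$, where $N_O$ is the product over an orbit $O$, one gets $C_N(\phi)=\prod_O C_{N_O}(\phi)$. For an orbit of length $r$ the diagonal embedding identifies $C_{N_O}(\phi)$ with $C_S(\psi)$, where $S$ is (a copy of) the simple factor and $\psi$ is the automorphism of $S$ induced by $\phi^r$; since $|\phi|$ is coprime to $|G|\ge|S|$, the automorphism $\psi$ is coprime. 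As a direct product is nilpotent only when every factor is, it suffices to contradict the nilpotency of a single $C_S(\psi)$. This is precisely the classification-dependent input I would isolate as a lemma: for every nonabelian finite simple group $S$ and every coprime automorphism $\psi$ of $S$ (including $\psi=1$, where $C_S(\psi)=S$), the fixed-point subgroup $C_S(\psi)$ is non-nilpotent. Granting this, $C_N(\phi)$ is non-nilpotent, contradicting the previous sentence; hence no counterexample exists and $G$ is soluble.

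The step I expect to be the main obstacle is exactly this simple-group lemma, and I would establish it by going through the classification. For alternating groups one inspects the few coprime automorphisms directly. For groups of Lie type the coprime automorphisms are, modulo inner ones, field and graph-field automorphisms, whose centralizers are again groups of Lie type over subfields (or their twisted analogues), which are nonabelian simple, or at any rate non-nilpotent; note that a coprime automorphism cannot be fixed-point-free here, since by Thompson's theorem that would force $S$ to be nilpotent. The sporadic groups are handled case by case. The delicate points are checking in each family that $\psi$ can genuinely be taken coprime to $|S|$, identifying $C_S(\psi)$ correctly with a subfield or subsystem subgroup, and ruling out any degeneration of that centralizer to a nilpotent group; it is this last verification that relies on detailed structural information about the finite simple groups, and hence on the classification.
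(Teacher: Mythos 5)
Your proposal cannot be compared against a proof in the paper, because the paper contains none: Theorem~\ref{wa} is imported from Wang and Chen \cite{wa}, with the explicit remark that its proof uses the classification of finite simple groups, and it is then used as a black box in Section 5. So the only meaningful comparison is with the cited source, and with the question of how much of the work your outline actually does.

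Your reduction is correct and is the standard skeleton for results of this type: the hypothesis passes to $\phi$-invariant quotients by Lemma~\ref{000}(2); a minimal counterexample has trivial soluble radical, hence its socle $N=S_1\times\dots\times S_m$ is a product of nonabelian simple groups permuted by $\phi$; and the diagonal argument on a $\langle\phi\rangle$-orbit of length $r$ correctly identifies $C_{N_O}(\phi)$ with $C_S(\psi)$, where $\psi$ is induced by $\phi^r$ and is again coprime because $|S|$ divides $|G|$. But you should recognize that the lemma you isolate --- $C_S(\psi)$ is non-nilpotent for every nonabelian simple $S$ and every coprime automorphism $\psi$ --- is not a technical loose end; after your (elementary) reduction it \emph{is} the theorem, and your proposal does not prove it. Two comments on your sketch of that lemma. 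For alternating and sporadic groups no inspection is needed: $|S|$ is even, so $|\psi|$ is odd, while $\mathrm{Out}(S)$ has order dividing $4$; hence the image of $\psi$ in $\mathrm{Out}(S)$ is trivial, so $\psi$ is inner, and an inner automorphism of order coprime to $|S|$ is trivial, giving $C_S(\psi)=S$. For groups of Lie type, your parenthetical appeal to Thompson's theorem is misplaced --- that theorem concerns automorphisms of \emph{prime} order, and fixed-point-freeness of a composite-order coprime automorphism does not yield nilpotency through it --- but it is also unnecessary: coprimality excludes diagonal, graph and graph-field parts (their orders share prime divisors with $|S|$, e.g. $2$ and $3$ always divide $|S|$ in the relevant cases), so up to conjugacy $\psi$ is a field automorphism, Steinberg's theorem identifies $C_S(\psi)$ with the corresponding subfield group, and one checks that even the smallest such groups ($\mathrm{PSL}_2(2)\cong S_3$, $\mathrm{PSL}_2(3)\cong A_4$, $\mathrm{PSU}_3(2)$, ${}^2B_2(2)$, \dots) are non-nilpotent. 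With that lemma carried out, your argument is complete; without it, you have reconstructed the reduction but not the classification-dependent core that the paper deliberately outsources to \cite{wa}.
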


Recall that the Fitting height $h(G)$ of a finite soluble group $G$ is the minimum number $h$ such that $G$ possesses a normal series of length $h$ all of whose quotients are nilpotent. The next result is a celebrated theorem, due to Thompson \cite{tho} (see \cite{turull} for a survey on results of similar nature).

\begin{theorem}\label{tho} If a finite soluble group $G$ admits a coprime automorphism $\phi$, then $h(G)$ is bounded in terms of the number of prime divisors of the order of $\phi$, counting multiplicities. 
\end{theorem}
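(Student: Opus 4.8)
The plan is to prove the sharper inequality in which Thompson's theorem is usually cast, namely
$h(G)\le h(C_G(\phi))+c\,\ell$,
where $\ell$ is the number of prime divisors of $|\phi|$ counted with multiplicity and $c$ is an absolute constant. This is the mathematically correct form (a bound in terms of $\ell$ alone is impossible, e.g.\ an order-$2$ automorphism swapping two copies of a group of large Fitting height), and it yields the asserted statement as soon as $h(C_G(\phi))$ is under control, which is exactly the situation in our applications, where $C_G(\phi)=G_\phi$ is nilpotent and so $h(C_G(\phi))\le 1$. The ascending Fitting series $1=F_0\le F_1\le\cdots\le F_{h(G)}=G$ is characteristic, hence $\phi$-invariant, and I would argue by induction on $\ell$.

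For the inductive step I would peel a single prime off $\langle\phi\rangle$. Fix a prime $p$ dividing $|\phi|$, put $A=\langle\phi\rangle$ and $A_1=\langle\phi^{p}\rangle$, so that $A_1$ is a normal subgroup of $A$ with $A/A_1$ cyclic of order $p$ and $\ell(\phi^{p})=\ell-1$. The centralizer $C=C_G(A_1)=C_G(\phi^{p})$ is $\phi$-invariant (as $A$ is abelian), and $A$ acts on $C$ through $A/A_1$, which is coprime of order $p$, with fixed-point subgroup $C_C(A)=C_G(\phi)$. The prime-order case (below) applied to this action gives $h(C)\le h(C_G(\phi))+c$, while the induction hypothesis applied to the single automorphism $\phi^{p}$ acting on $G$ gives $h(G)\le h(C_G(\phi^{p}))+c(\ell-1)=h(C)+c(\ell-1)$. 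Combining the two inequalities yields $h(G)\le h(C_G(\phi))+c\,\ell$, completing the induction.

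Everything thus reduces to the case where $\phi$ has prime order $p$, where the target is $h(G)\le h(C_G(\phi))+c$ with $c$ absolute. Here I would pass to a minimal counterexample and run the standard reductions: using that $C_G(F(G))\le F(G)$ in a soluble group, one arranges that the relevant Fitting layer is a $q$-group with elementary abelian Frattini quotient on which the groups above it act faithfully. The decisive input is representation-theoretic: via Hall--Higman's Theorem B one controls the minimal polynomial of the order-$p$ automorphism (and of the $p$-elements arising in the sections) acting on these modules over a field of coprime characteristic, and hence bounds how many consecutive Fitting factors $\phi$ can traverse while leaving only few fixed points. The anchor at the bottom is Thompson's nilpotency theorem, that a fixed-point-free automorphism of prime order forces $G$ to be nilpotent, i.e.\ the extremal case $h(C_G(\phi))=0$, $h(G)=1$.

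The main obstacle is precisely this prime-order core: the induction on $\ell$ and the fixed-point bookkeeping are routine once it is available, but the prime case carries the full depth of Thompson's analysis, combining the Hall--Higman estimates with the minimal-counterexample reductions, and an honest treatment is long. For the present paper I would therefore invoke it as the cited theorem of Thompson, recording only that in our setting $C_G(\phi)=G_\phi$ is nilpotent, so that the resulting bound on $h(G)$ depends on $\ell(\phi)$ alone.
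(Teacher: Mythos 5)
The paper offers no proof of this statement at all: Theorem \ref{tho} is quoted as a known theorem, with \cite{tho} and the survey \cite{turull} as references, so the only thing to compare your proposal against is a citation. Your treatment differs in two substantive ways, both to your credit. First, your opening objection is correct: as printed, the statement is false. Taking $H$ soluble of odd order and arbitrarily large Fitting height, the order-two automorphism of $H\times H$ swapping the factors is coprime, yet the Fitting height equals $h(H)$ while $\ell(\phi)=1$; any true version must either assume $C_G(\phi)=1$ (which is Thompson's actual fixed-point-free theorem of 1964) or let the bound depend on $h(C_G(\phi))$ as well, as in your inequality $h(G)\le h(C_G(\phi))+c\,\ell$. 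That corrected form --- due in this generality to later work (Kurzweil, Turull) of the kind surveyed in \cite{turull}, rather than to Thompson's paper itself --- is exactly what the present paper's application requires: in the proof of Theorem \ref{2} the subgroup $G_\phi$ is nilpotent, so $h(C_G(\phi))\le 1$ and one still concludes that $h(G)$ is bounded in terms of $n$ alone, which is the only consequence the paper extracts. Second, your induction on $\ell$ is sound: $C=C_G(\phi^{p})$ is $\phi$-invariant and soluble, $A/A_1$ acts on it coprimely with fixed-point subgroup $C_G(\phi)$, so the prime-order case gives $h(C)\le h(C_G(\phi))+c$, while the inductive hypothesis for $\phi^{p}$ acting on $G$ gives $h(G)\le h(C)+c(\ell-1)$, and the two combine as claimed. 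What you do not prove --- the prime-order inequality, which carries all the Hall--Higman depth --- you explicitly defer to the cited literature; that is the same move the paper makes for the entire theorem, only made by you at a much smaller and clearly identified kernel. In short, the paper buys the whole (mis)statement by citation; you repair the statement, supply the reduction, and cite only the irreducible core.
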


The following lemma will be helpful.
\begin{lemma}\label{004} Let $G$ be a finite group admitting a coprime automorphism $\phi$ and assume that $G$ is soluble with derived length $d$. Suppose that $G=[G,\phi]$ and $x^e=1$ for every $x\in G_{-\phi}$. Then $G$ has $(d,e)$-bounded exponent.
\end{lemma}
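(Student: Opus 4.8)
The plan is to induct on the derived length $d$. In the base case $d=1$ the group $G$ is abelian, so the set $G_{-\phi}=\{x^{-1}x^{\phi}:x\in G\}$ is in fact a subgroup, and it coincides with $[G,\phi]=G$; since each of its elements has order dividing $e$, we get $\exp(G)\mid e$. For the inductive step I would set $A=G^{(d-1)}$, the last nontrivial term of the derived series, which is an abelian characteristic (hence $\phi$-invariant) subgroup. First I would check that $G/A$ again satisfies the hypotheses: it is soluble of derived length $d-1$, the automorphism remains coprime, one has $G/A=[G/A,\phi]$ because $[G,\phi]=G$, and $(G/A)_{-\phi}$ is exactly the image of $G_{-\phi}$ and so consists of elements of order dividing $e$. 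By induction $\exp(G/A)\le E_{1}$ for some $(d-1,e)$-bounded number $E_{1}$, whence $g^{E_{1}}\in A$ for every $g\in G$. It therefore remains only to bound $\exp(A)$.

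Since $A$ is abelian and $\phi$ is coprime, the standard decomposition gives $A=C_{A}(\phi)\times[A,\phi]$, and $[A,\phi]=A_{-\phi}\subseteq G_{-\phi}$ has exponent dividing $e$; the difficulty is the fixed part $C_{A}(\phi)$. To handle it I would pass to $\bar{G}=G/\tilde{B}$, where $\tilde{B}=\langle[A,\phi]^{G}\rangle$ is the normal closure of $[A,\phi]$ in $G$. As $\tilde{B}$ is a $\phi$-invariant normal subgroup of the abelian group $A$ generated by conjugates of a subgroup of exponent dividing $e$, it has exponent dividing $e$. In $\bar{G}$ the image $\bar{A}$ of $A$ is centralized by $\phi$, and the key observation is that $G=[G,\phi]$ forces $\bar{A}\le Z(\bar{G})$: for $\bar{a}\in\bar{A}$ and $\bar{g}\in\bar{G}$ one has $\bar{a}^{\bar{g}}=(\bar{a}^{\bar{g}})^{\phi}=\bar{a}^{\bar{g}^{\phi}}$, from which one deduces that each commutator $[\bar{g},\phi]=\bar{g}^{-1}\bar{g}^{\phi}$ centralizes $\bar{a}$; since these elements generate $[\bar{G},\phi]=\bar{G}$, the element $\bar{a}$ is central.

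This is the step I expect to be the main obstacle: a priori $\bar{A}$ is a central subgroup lying inside $\bar{G}'$, and bounding its exponent in terms of $\exp(\bar{G}/\bar{A})$ looks like a question about the exponent of a Schur multiplier, which would reintroduce serious Lie-theoretic or cohomological machinery. I would sidestep this by exploiting the derived structure instead. Put $M=\bar{G}^{(d-2)}$, so that $M'=\bar{A}\le Z(\bar{G})\le Z(M)$; hence $M$ is nilpotent of class at most $2$ and the commutator map on $M$ is bilinear. Because $\bar{A}\le Z(M)$, the quotient $M/Z(M)$ is a homomorphic image of $M/\bar{A}\le\bar{G}/\bar{A}$, whose exponent is at most $E_{1}$; bilinearity then yields $[x,y]^{E_{1}}=[x^{E_{1}},y]=1$ for all $x,y\in M$, so $\exp(\bar{A})=\exp(M')\le E_{1}$.

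Combining $\exp(\bar{G}/\bar{A})\le E_{1}$ with $\exp(\bar{A})\le E_{1}$ gives $\exp(\bar{G})\le E_{1}^{2}$, and then $\exp(G)$ divides $\exp(\tilde{B})\exp(\bar{G})\le e\,E_{1}^{2}$, which is $(d,e)$-bounded, completing the induction. The two points requiring the most care are the verification that $\phi$ acting trivially on $\bar{A}$ together with $G=[G,\phi]$ forces centrality $\bar{A}\le Z(\bar{G})$, and the identification of $M=\bar{G}^{(d-2)}$ as a class-$2$ group, since it is precisely this reduction that replaces a genuine multiplier-exponent estimate by the elementary bilinearity argument.
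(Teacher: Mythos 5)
Your proof is correct, but at the decisive step it takes a genuinely different route from the paper. Both arguments run the same induction on the derived length with $A=G^{(d-1)}$, and both ultimately rest on the same centrality mechanism (the paper's Lemma \ref{0000}: a normal $\phi$-invariant subgroup contained in $G_\phi$ is centralized by $[G,\phi]=G$, which you re-derive inline for $\bar A\le Z(\bar G)$). The difference is how the central piece of $A$ is then bounded. The paper stays inside $G$: writing $M=M_\phi M_{-\phi}$ (Lemma \ref{002}) gives $M^e\le M_\phi$, hence $M^e\le Z(G)$, so $\exp(G/Z(G))$ is $(d,e)$-bounded, and then it invokes Mann's theorem \cite{mann} --- the exponent of $G'$ is bounded in terms of the exponent of $G/Z(G)$ --- to bound $\exp(M)$ since $M\le G'$. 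You instead pass to the quotient $\bar G=G/\tilde B$ by the normal closure of $[A,\phi]$ (of exponent dividing $e$), and then avoid Mann's theorem entirely: the Schur-multiplier-type difficulty you correctly anticipated is dissolved by observing that $\bar A=(\bar G^{(d-2)})'$ is the derived subgroup of a group of class at most $2$, where bilinearity of commutators gives $[x,y]^{E_1}=[x^{E_1},y]=1$ directly. What each approach buys: the paper's is shorter, with Mann's theorem absorbing the central-extension difficulty in one stroke; yours is completely elementary and self-contained, showing that for this particular lemma the appeal to \cite{mann} can be replaced by the derived-series observation that the central subgroup to be bounded is itself a commutator subgroup of a class-two section. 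Both yield the same quality of bound (your explicit $e\,E_1^2$ versus the paper's implicit bound through Mann's theorem).
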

\begin{proof}	We use induction on $d$. If $d=1$, the result is trivial, so we assume that $d\geq2$. Let $M=G^{(d-1)}$ be the last nontrivial term of the derived series of $G$. By induction, the exponent of $G/M$ is $(d,e)$-bounded. Since $M=M_{\phi}M_{-\phi}$ and $(M_{-\phi})^{e}=1$, taking into account that $M$ is abelian, we deduce that $M^{e} \leq M_{\phi}$. In view of Lemma \ref{0000}, we conclude that $M^{e} \leq Z(G)$. Hence, the exponent of $G/Z(G)$ is $(d,e)$-bounded. A theorem of Mann  \cite{mann} now guarantees that $G'$ has $(d,e)$-bounded exponent. This implies that $M$ has $(d,e)$-bounded exponent too and the result follows. 
\end{proof}

For a subset $X$ of a group admitting an automorphism $\phi$ we write $\langle X\rangle^{\langle\phi\rangle}$ to denote the minimal $\phi$-invariant subgroup containing $X$.
\begin{lemma}\label{005} Let $G$ be a finite nilpotent group admitting a coprime automorphism $\phi$ such that $G=[G,\phi]$. Let $S$ be the set of those elements $h\in G_\phi$ for which there exist $x_1,x_2\in G_{-\phi}$ such that $h\in\langle x_1,x_2\rangle^{\langle\phi\rangle}$. Then $G_{\phi}=\langle S\rangle$.
\end{lemma}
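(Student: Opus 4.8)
The plan is to realise $G_\phi$ as a subgroup generated by Schreier generators relative to a well-chosen transversal, and then to recognise each such generator as a member of $S$. The hypothesis $G=[G,\phi]$ means exactly that $G=\langle G_{-\phi}\rangle$, and since $G$ is finite every element is a positive word in $G_{-\phi}$ (equivalently in $G_{-\phi}^{-1}$). The nilpotency of $G$ enters through Lemma \ref{002}, which provides the unique factorization $G=G_{-\phi}G_\phi$ with $|G|=|G_{-\phi}||G_\phi|$. Inverting this factorization gives a unique decomposition $g=h't$ with $h'\in G_\phi$ and $t\in G_{-\phi}^{-1}$; thus $T:=G_{-\phi}^{-1}$ is a right transversal to the subgroup $H:=G_\phi$ containing $1$, and $T$ generates $G$.

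First I would apply Schreier's lemma to this generating transversal. Since $G=\langle T\rangle$ and $T$ is a right transversal to $H=G_\phi$, the subgroup $G_\phi$ is generated by the Schreier elements $s_{t,y}=ty\,\overline{ty}^{\,-1}$, where $t,y$ range over $T$ and $\overline{g}\in T$ denotes the representative of the coset $Hg$. Each $s_{t,y}$ lies in $G_\phi$, so $\langle s_{t,y}\rangle=G_\phi$; as the inclusion $\langle S\rangle\le G_\phi$ is clear, everything reduces to showing that every $s_{t,y}$ belongs to $S$.

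The main step, and the only place where genuine work is needed, is to identify $s_{t,y}$ with an element of $S$. Writing $t=a^{-1}$ and $y=b^{-1}$ with $a,b\in G_{-\phi}$, one has $ty=(ba)^{-1}$, which lies in the $\phi$-invariant subgroup $M=\langle a,b\rangle^{\langle\phi\rangle}$. Since $M$ is nilpotent and $\phi$ restricts to a coprime automorphism of $M$, Lemma \ref{002} applied inside $M$ (and inverted) factorizes $ty=h^{*}t^{*}$ with $h^{*}\in M_\phi$ and $t^{*}\in M_{-\phi}^{-1}\subseteq T$. This is again a factorization of $ty$ of the form $G_\phi\cdot T$, so by uniqueness of the decomposition $G=G_\phi\,G_{-\phi}^{-1}$ it coincides with $ty=s_{t,y}\overline{ty}$; hence $s_{t,y}=h^{*}\in M_\phi=M\cap G_\phi$. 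Because $a,b\in G_{-\phi}$ and $M=\langle a,b\rangle^{\langle\phi\rangle}$, the element $s_{t,y}$ is by definition a member of $S$. Therefore $G_\phi=\langle s_{t,y}\rangle\le\langle S\rangle\le G_\phi$, and equality follows. I expect the only delicate points to be the bookkeeping of left versus right cosets—one must pass to inverses so that Lemma \ref{002}, which factors $G=G_{-\phi}G_\phi$, yields a \emph{right} transversal—and the verification that the local factorization inside $M$ is forced to agree with the global Schreier factorization via the uniqueness of the decomposition.
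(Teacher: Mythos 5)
Your proof is correct, but it takes a genuinely different route from the paper's. The paper argues by induction on the length of a word: given $h\in G_\phi$, it writes $h=g_1\cdots g_m$ with $g_i\in G_{-\phi}$ (possible since $G=[G,\phi]=\langle G_{-\phi}\rangle$), applies Lemma \ref{002} inside $K=\langle g_{m-1},g_m\rangle^{\langle\phi\rangle}$ to rewrite $g_{m-1}g_m=g_0h_0$ with $g_0\in K_{-\phi}\subseteq G_{-\phi}$ and $h_0\in K_\phi\subseteq\langle S\rangle$, and thus shortens the word: $hh_0^{-1}=g_1\cdots g_{m-2}g_0$ is a product of $m-1$ elements of $G_{-\phi}$ lying in $G_\phi$, so induction applies. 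You instead invoke Schreier's lemma for the right transversal $T=G_{-\phi}^{-1}$ of $G_\phi$ (legitimately obtained by inverting the unique factorization of Lemma \ref{002}), and then pin down each Schreier generator $s_{t,y}$ by producing a second $G_\phi\cdot T$ factorization of $ty=(ba)^{-1}$ locally inside $M=\langle a,b\rangle^{\langle\phi\rangle}$ and appealing to global uniqueness; I checked the delicate points and they hold: $T$ is indeed a right transversal containing $1$ and generating $G$, $\phi$ restricts to a coprime automorphism of the nilpotent subgroup $M$, $M_{-\phi}^{-1}\subseteq T$, and uniqueness forces $s_{t,y}=h^{*}\in M\cap G_\phi\subseteq S$. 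The common engine of both arguments is the same double use of Lemma \ref{002} — globally in $G$ and locally in a two-generator $\phi$-invariant subgroup — but your version replaces the paper's word-shortening induction with a one-shot structural identification; it buys an explicit generating set of $G_\phi$ consisting of elements of $S$ (the Schreier elements), while the paper's induction is shorter and avoids all transversal and coset-representative bookkeeping.
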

\begin{proof} 	Set $H=\langle S \rangle$. Obviously, $H \leq G_{\phi}$. So we need to prove that $G_{\phi} \leq H$. Let $h \in G_{\phi}$. Since $G=[G,\phi]$, we can write $h=g_1 \cdots g_m$ with $g_i \in G_{-\phi}$. We wish to prove that $h \in H$. This will be shown by induction on $m$. If $m \leq 2$, then obviously $h\in H$, so assume that $m\geq3$. Let $K=\langle g_{m-1},g_{m}\rangle^{\langle\phi\rangle}$. Since $K$ is $\phi$-invariant, according to Lemma \ref{002} we have $g_{m-1}g_{m}=g_0h_0$, where $g_0\in K_{-\phi}$ and $h_0\in K_\phi$. Clearly $K_{\phi} \leq H$ and so $h_0\in H$. Thus, $h=g_1\cdots g_{m-2}g_0h_0$ and $hh_0^{-1}=g_1\cdots g_{m-2}g_0$. By induction we get $hh_{0}^{-1}\in H$, whence $h\in H$. Therefore, we conclude that $G_{\phi}=H$.   
\end{proof}
The next lemma is straightforward from a result of Hartley \cite[Lemma 2.6]{ha}.
\begin{lemma}\label{haha} Let $\phi$ be a coprime automorphism of a finite group $G$. Let $\{N_i : i\in I\}$ be a family of normal $\phi$-invariant subgroups of $G$ and $N=\prod_iN_i$.
Then $N_\phi=\prod_i(N_i)_\phi$.
\end{lemma}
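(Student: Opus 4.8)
The plan is to reduce the statement to the case of two factors and then settle that case with the coprime fixed-point quotient formula of Lemma \ref{000}.2. First I would dispose of the easy inclusion: each $(N_i)_\phi$ equals $N_i\cap G_\phi\le N_\phi$, and since the $N_i$ are normal in $G$ their fixed-point subgroups are normal in $G_\phi$ (conjugation by an element of $G_\phi$ preserves both $N_i$ and the set of $\phi$-fixed points), so the product $\prod_i(N_i)_\phi$ is a subgroup of $N_\phi$. Next, since $G$ is finite, the join $N$ already coincides with $\prod_{i\in J}N_i$ for some finite subset $J\subseteq I$. If I establish the identity for this finite subfamily, then $N_\phi=\prod_{i\in J}(N_i)_\phi\le\prod_{i\in I}(N_i)_\phi\le N_\phi$, so it suffices to treat finite families, which I would handle by induction on $|J|$.

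The core is the two-factor case: given normal $\phi$-invariant subgroups $M,K$ with $N=MK$, I claim $N_\phi=M_\phi K_\phi$. Fix $x\in N_\phi$ and pass to the quotient $N/M\cong K/(K\cap M)$, which is an isomorphism respecting the $\phi$-action because $M$, $K$ and $K\cap M$ are all $\phi$-invariant. The image of $x$ in $N/M$ is fixed by $\phi$, so by Lemma \ref{000}.2 applied to $K$ together with its normal $\phi$-invariant subgroup $K\cap M$, there is an element $k\in K_\phi$ with $kM=xM$. Then $m=xk^{-1}\in M$, and since $x$ and $k$ are both $\phi$-fixed, so is $m$; hence $m\in M_\phi$ and $x=mk\in M_\phi K_\phi$. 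This yields $N_\phi\le M_\phi K_\phi$, while the reverse inclusion is the easy part recorded above.

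The inductive step is then formal: writing $N=N_1\cdots N_r=(N_1\cdots N_{r-1})N_r$, both $N_1\cdots N_{r-1}$ and $N_r$ are normal and $\phi$-invariant, so the two-factor case gives $N_\phi=(N_1\cdots N_{r-1})_\phi\,(N_r)_\phi$, and the induction hypothesis disposes of the first factor. The only place where genuine content enters is the two-factor decomposition, which is precisely the substance of Hartley's Lemma~2.6 in \cite{ha}. The hard part is conceptual rather than computational: this step uses coprimeness in an essential way through Lemma \ref{000}.2 and fails for non-coprime actions, so the care required is to ensure that at each stage the relevant subgroups and quotients remain within the coprime framework where the fixed-point quotient formula is available.
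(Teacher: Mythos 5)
Your proof is correct, and it differs from the paper in an interesting way: the paper gives no argument at all for this lemma, deducing it in one line from Hartley's Lemma 2.6 in \cite{ha}, whereas you reconstruct the content of that citation from scratch. Your core two-factor step is sound: the natural isomorphism $N/M\cong K/(K\cap M)$ is $\phi$-equivariant because $M$, $K$ and $K\cap M$ are all $\phi$-invariant; the restriction of $\phi$ to $K$ is again coprime since $|K|$ divides $|G|$; and Lemma \ref{000}.2 applied to $K$ modulo $K\cap M$ yields $k\in K_\phi$ with $kM=xM$, after which $m=xk^{-1}$ lies in $M$ and is $\phi$-fixed, giving $N_\phi\le M_\phi K_\phi$. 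You also correctly handle the points one could easily gloss over: $(N_i)_\phi=N_i\cap G_\phi$ is normal in $G_\phi$, so $\prod_i(N_i)_\phi$ really is a subgroup; finiteness of $G$ reduces an arbitrary index set to a finite one; and the induction step $N=(N_1\cdots N_{r-1})N_r$ stays within the class of normal $\phi$-invariant subgroups. What your route buys is a self-contained elementary proof resting only on the standard coprime fixed-point formula — in effect a direct proof of the finite case of Hartley's lemma; what the paper's route buys is brevity, together with the fact that Hartley's result is established in a more general periodic locally finite setting than is needed here.
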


Recall that if $G$ is a nilpotent group of class $c$ which is generated by elements of order dividing $e$, then the exponent of $G$ divides $e^{c}$ (see for example \cite[Corollary 2.5.4]{khukhro}). We are now ready to prove Theorem \ref{2}.

\begin{proof}[Proof of Theorem \ref{2}.] Without loss of generality we may assume that $G=[G,\phi]$. For arbitrary $x_1,x_2\in G_{-\phi}$ let $K(x_1,x_2)$ be the minimal $\phi$-invariant subgroup containing $x_1,x_2$. Lemma \ref{004} tells us that the exponent of $K(x_1,x_2)$ is $(d,e)$-bounded. 

In view of Theorem \ref{wa} the group $G$ is soluble. Thompson's theorem \ref{tho} tells us that the Fitting height of $G$ is bounded by a constant depending only on $n$. The theorem will be proved by induction on the Fitting height of $G$. Suppose first that $G$ is nilpotent. According to Lemma \ref{005} the subgroup $G_\phi$ is generated by intersections $G_\phi\cap K(x_1,x_2)$, where $x_1,x_2$ range over $G_{-\phi}$. Since  $G_{\phi}$ is nilpotent of class $c$, we conclude that the exponent of $G_\phi$ is $(c,d,e)$-bounded. Thus, we see that any element of $G_{\phi}\cup G_{-\phi}$ is contained in a $\phi$-invariant  subgroup of $(c,d,e)$-bounded exponent. Theorem \ref{1} says that the exponent of $G$ is $(c,d,e,n)$-bounded, as required.

Assume now that the Fitting height of $G$ is at least 2. Let $F=F(G)$ be the Fitting subgroup of $G$ and $N=\langle[F,\phi]^{G}\rangle$ be the normal closure of $[F,\phi]$ in $G$. Note that the image of $F$ in $G/N$ is contained in $(G/N)_\phi$ and so, in view of Lemma \ref{0000}, the image of $F$ in $G/N$ is contained in the center $Z(G/N)$. It follows that $h(G/N)\leq h-1$ and therefore by induction the exponent of $G/N$ is $(c,d,e,n)$-bounded. Hence, it is sufficient to show that the exponent of $N$ is $(c,d,e,n)$-bounded. Clearly, $N=N_{\phi}[F, \phi]$. We already know that the exponent of $[F,\phi]$ is $(c,d,e,n)$-bounded. In view of Theorem \ref{1} it remains to show that the exponent of $N_{\phi}$ is also bounded.

The subgroup $N$ is generated by the conjugates $[F,\phi]^x$, where $x$ ranges over $G$. For each $x\in G$ let $N_x$ be the minimal $\phi$-invariant subgroup of $G$ containing $[F,\phi]^x$. Since the exponent of  $[F,\phi]$ is $(c,d,e,n)$-bounded and $N_x$ is a product of at most $n$ normal subgroups of the form  $([F,\phi]^x)^{\phi^i}$, we conclude that the exponent of $N_x$ is $(c,d,e,n)$-bounded, too. In particular, the exponent of $(N_{x})\cap G_{\phi}$ is $(c,d,e,n)$-bounded. Lemma \ref{haha} says that $N_{\phi}$ is the product of subgroups $(N_{x})\cap G_{\phi}$, where $x\in G$. Thus, $N_{\phi}$ is a nilpotent group of class at most $c$ generated by elements of $(c,d,e,n)$-bounded order. Hence, the exponent of $N_{\phi}$ is $(c,d,e,n)$-bounded and this implies that the exponent of $N$ is bounded in terms of $c,d,e,n$ only. The proof is now complete. 
\end{proof}	

\baselineskip 11 pt

\end{document}